\documentclass[12pt]{amsart}
\usepackage{amssymb,latexsym}
\usepackage{graphicx}
\usepackage{amsmath}
\usepackage{amsfonts}
\usepackage{amssymb}
\usepackage{amsthm}
\usepackage{tabularx}
\usepackage{color}
\usepackage{latexsym}
\usepackage{xypic}
\usepackage[T1]{fontenc}
\usepackage{tgbonum}
\usepackage[height=190mm,width=140mm]{geometry}
% ----------------------------------------------------------------
\vfuzz2pt % Don't report over-full v-boxes if over-edge is small
\hfuzz2pt % Don't report over-full h-boxes if over-edge is small
% THEOREMS -------------------------------------------------------

%\textwidth 6.4 in.
% \setlength{\textwidth}{140truemm}
%\setlength{\textheight}{200truemm}
%\setlength{\oddsidemargin}{0mm}
\setlength{\topmargin}{10truemm}
 \setlength{\parindent}{5.0truemm}
\catcode`@=11 \@addtoreset{equation}{section} \catcode`@=12
\newtheorem{theorem}{Theorem}[section]
\newtheorem{corollary}[theorem]{Corollary}

\newtheorem{lemma}[theorem]{Lemma}

\theoremstyle{remark}
\newtheorem{definition}[theorem]{Definition}

% MATH -----------------------------------------------------------

\newcommand{\p}{\mathcal{P}}

% ----------------------------------------------------------------
\begin{document}

\title[A New Operator of Primal Topological  Spaces]
{A New Operator of Primal Topological  Spaces}

\author{Ahmad Al-Omari, Santanu Acharjee and Murad \"{O}zko\c{c} }
\address{Al al-Bayt University, Faculty of Sciences, Department of Mathematics
P.O. Box $130095$, Mafraq $25113$, Jordan}
\email{omarimutah1@yahoo.com}

\address{ Department of Mathematics, Guwahati-$781014$, Assam, India}
\email{sacharjee326@gmail.com}

\address{ Mu\u{g}la S{\i}tk{\i} Ko\c{c}man University,
Faculty of Science, Department of Mathematics  $48000$,
Mente\c{s}e-Mu\u{g}la, Turkey}

\email{murad.ozkoc@mu.edu.tr}

 {% \subjclass[2000]{54A05,  54C10}
\footnote{\textsf{2020 Mathematics Subject Classifications:} 54A05,  54A10.}}
 \keywords{Primal, primal topological space, $\Psi$-operator, topology suitable, grill.}

%\date{}%
%\dedicatory{}%
% ----------------------------------------------------------------
\maketitle
% -------------------------------------------------------------
% ----------------------------------------------------------------
\begin{abstract}

Recently, Acharjee et al. [S. Acharjee, M. Özkoç and F. Y. Issaka, \textit{Primal topological spaces}, arXiv:2209.12676[math.GM]] introduced a new structure in topology named primal. Primal is the duel structure of grill. The main purpose of this paper is to introduce an operator using  primal and obtain some of its fundamental properties. Also, we define the notion of topology suitable for a primal. We not only obtain some characterizations of this new notion, but also investigate many properties.
\end{abstract}
\maketitle

\section{Introduction}
Topology is  one of the indispensable branches of mathematics. Due to its various generalized applications in both science and social science, several associated  structures such as ideal \cite{Ja}, filter \cite{Ku}, grill \cite{Cho}, etc. have been introduced. The notion of ideal is the dual of filter. Applications of filter are highly available in literature \cite{MO, MO1}. Moreover, ideal and its generalized fuzzy notion  have been finding several uses in general topology \cite{Al2}, summability theory \cite{BC1}, fuzzy summability theory \cite{ BC2}, and  many others. Similar to ideal,  one of the classical structures in topology is grill. The definition of grill was introduced by Ch\'oquet \cite{Cho} in 1947. Like ideal and filter, grill also has several applications in general topology \cite{Al1}, fuzzy topology \cite{Az}, and many others. Recently,  the notion of primal was introduced by Acharjee et al  \cite{aoi}. Primal is the dual structure of grill. In \cite{aoi}, authors obtained many fundamental properties of this new structure. On the other hand, the set operator $\Psi$ on ideal topological space was introduced and studied by Hamlett and Jankovi\'c  \cite{Ha}. Recently,  different types of $\Psi$-operator can be found  in \cite{Al1, Al2, Al3, Is}, and many others.

In this paper, we continue to study the properties of the operator $\diamond$ defined in  \cite{aoi}. We define an operator $\Psi$ using primal and investigate its various fundamental properties. Also, we introduce the notion of topology suitable for a primal and obtain its characterizations along with  several properties.\\

\section{Preliminaries}

Throughout this paper, $(X,\tau)$ and $(Y,\sigma)$ (briefly, $X$ and $Y$) represent topological spaces unless otherwise stated. For any subset $A$ of a space $X$, $cl(A)$ and $int(A)$ denote  closure and interior of $A$, respectively. The powerset of a set $X$ will be denoted by $2^X.$ The family of all open neighborhoods of a point $x$ of $X$ is denoted by $\tau(x).$ Also, the family of all closed subsets of a space $X$ will be denoted by $C(X).$ Now, we procure following notions and results which will be required in next section.

\begin{definition} $($\cite{Cho}$)$
A family $\mathcal{G}$ of $2^X$ is called a grill  on $X$ if $\mathcal{G}$ satisfies the following conditions:
\begin{enumerate}
  \item $\emptyset\notin \mathcal{G},$
  \item if $A\in\mathcal{G}$ and $A\subseteq B,$ then $B\in \mathcal{G}$,
  \item if $A\cup B\in \mathcal{G},$ then $A\in\mathcal{G}$ or $B\in\mathcal{G}.$
\end{enumerate}

\end{definition}

\begin{definition} $($\cite{aoi}$)$
Let $X$ be a nonempty set. A collection $\mathcal{P}\subseteq   2^X$  is called a primal on $X$ if it satisfies the following conditions:
\begin{enumerate}
  \item $X \notin \mathcal{P}$,
  \item if $A\in \mathcal{P}$ and $B\subseteq A$, then $B\in \mathcal{P}$,
  \item if $A\cap B\in \mathcal{P}$, then $A\in \mathcal{P}$ or $B\in \mathcal{P}$.
\end{enumerate}
\end{definition}

\begin{corollary} $($\cite{aoi}$)$ \label{23}
Let $X$ be a nonempty set. A collection $\mathcal{P}\subseteq   2^X$  is a primal on $X$ if and only if it satisfies the following conditions:
\begin{enumerate}
  \item $X \notin \mathcal{P}$,
  \item if $B\notin \mathcal{P}$ and $B\subseteq A$, then $A\notin \mathcal{P}$,
  \item if $A\notin \mathcal{P}$ and $B\notin \mathcal{P},$ then $A\cap B\notin \mathcal{P}.$
\end{enumerate}
\end{corollary}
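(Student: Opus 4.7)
The plan is to establish the equivalence by observing that conditions (2) and (3) of the corollary are precisely the contrapositives of conditions (2) and (3) of Definition 2.2, while condition (1) is identical in both formulations. Hence the proof reduces to two short verifications of propositional equivalence, done in both directions.

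For the forward implication, I would assume $\mathcal{P}$ is a primal in the sense of Definition 2.2 and check conditions (1)--(3) of the corollary. Condition (1) is immediate. For (2), suppose $B \notin \mathcal{P}$ and $B \subseteq A$. If, contrary to the claim, $A \in \mathcal{P}$, then Definition 2.2(2) applied to $B \subseteq A$ would force $B \in \mathcal{P}$, a contradiction; thus $A \notin \mathcal{P}$. For (3), suppose $A \notin \mathcal{P}$ and $B \notin \mathcal{P}$. If $A \cap B \in \mathcal{P}$, then Definition 2.2(3) yields $A \in \mathcal{P}$ or $B \in \mathcal{P}$, again contradicting the assumptions; hence $A \cap B \notin \mathcal{P}$.

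For the reverse implication, I would assume conditions (1)--(3) of the corollary and verify Definition 2.2. Condition (1) is again immediate. For Definition 2.2(2), fix $A \in \mathcal{P}$ and $B \subseteq A$; if $B \notin \mathcal{P}$, then (2) of the corollary would give $A \notin \mathcal{P}$, contradicting $A \in \mathcal{P}$, so $B \in \mathcal{P}$. For Definition 2.2(3), fix $A \cap B \in \mathcal{P}$; if both $A \notin \mathcal{P}$ and $B \notin \mathcal{P}$, then (3) of the corollary would give $A \cap B \notin \mathcal{P}$, a contradiction, so $A \in \mathcal{P}$ or $B \in \mathcal{P}$.

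There is no substantive obstacle here: the argument is purely formal, as the two lists of axioms are logically dual via contraposition. The only thing to be careful about is cleanly writing out each contrapositive and not conflating the assumed membership relation with its negation.
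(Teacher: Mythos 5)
Your proof is correct: the three conditions of the corollary are exactly the contrapositives of the three conditions in the definition of a primal, and your two-directional verification by contradiction is the standard (and essentially only) argument; the paper cites this result without proof, and your reasoning matches what the cited source would do.
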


\begin{definition}$($\cite{aoi}$)$
A topological space $(X,\tau)$ with a primal $\mathcal{P}$ on $X$ is called a primal topological space and denoted by $(X,\tau,\mathcal{P}).$
\end{definition}

\begin{definition} (\cite{aoi})
Let $(X,\tau,\mathcal{P})$ be a primal topological space. We consider a map $(\cdot)^{\diamond}: 2^X \rightarrow 2^X$ as $A^{\diamond}(X,\tau,\mathcal{P})=\{x\in X : (\forall U\in \tau (x))(A^c\cup U^c \in \mathcal{P})\}$ for any subset $A$ of $X.$ %and $\tau(x)$ is the family of all open neighbourhoods of $x\in X$.
We can also write $A_{\mathcal{P}}^{\diamond}$ as $A^{\diamond}(X,\tau,\mathcal{P})$ to specify the primal as per our requirements.  %Here, $2^X$ indicates powerset of $X$.
\end{definition}

\begin{definition}$($\cite{aoi}$)$
Let $(X,\tau,\mathcal{P})$ be a primal topological space. We consider a map $cl^{\diamond}: 2^X\rightarrow 2^X$ as $cl^{\diamond}(A)=A\cup A^{\diamond}$, where $A$ is any subset of $X$. %Here, $2^X$ indicates powerset of $X$.
\end{definition}

\begin{definition}$($\cite{aoi}$)$
Let $(X,\tau,\mathcal{P})$ be a primal topological space. Then, the family $\tau^{\diamond}=\{A\subseteq X|cl^{\diamond}(A^c)=A^c\}$ is a topology on $X$ induced by topology $\tau$ and primal $\mathcal{P}.$  It is called primal topology on $X.$ We can also write $\tau_{\mathcal{P}}^{\diamond}$ instead of $\tau^{\diamond}$ to specify the primal as per our requirements.
\end{definition}

\begin{theorem}$($\cite{aoi}$)$ ~\label{12}
Let $(X, \tau, \p)$  be a primal topological  space and $A, B\subseteq X$. If $A$ is open in $X$, then $A\cap  B^{\diamond}\subseteq (A\cap B)^{\diamond}$.
\end{theorem}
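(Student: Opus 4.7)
The plan is to work directly from the definition of the $\diamond$-operator. Suppose $x \in A \cap B^{\diamond}$, with $A$ open. I want to show $x \in (A \cap B)^{\diamond}$, which means: for every $V \in \tau(x)$, the set $(A \cap B)^{c} \cup V^{c}$ lies in $\mathcal{P}$. By De Morgan's law this is $A^{c} \cup B^{c} \cup V^{c}$, so the goal reduces to showing $A^{c} \cup B^{c} \cup V^{c} \in \mathcal{P}$ for every open neighborhood $V$ of $x$.

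The key trick is the following: since $A$ is open and $x \in A$, and since $V \in \tau(x)$, the intersection $A \cap V$ is again an open neighborhood of $x$, i.e.\ $A \cap V \in \tau(x)$. Because $x \in B^{\diamond}$, the defining condition applied to the neighborhood $A \cap V$ yields $B^{c} \cup (A \cap V)^{c} \in \mathcal{P}$. Expanding via De Morgan gives exactly $B^{c} \cup A^{c} \cup V^{c} \in \mathcal{P}$, which is what was wanted.

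Therefore $x \in (A \cap B)^{\diamond}$, and the inclusion $A \cap B^{\diamond} \subseteq (A \cap B)^{\diamond}$ follows. There is essentially no obstacle here; the only nontrivial move is the substitution of $A \cap V$ for a generic test neighborhood of $x$, a maneuver that depends crucially on the hypothesis that $A$ itself is open. Notice that no primal axiom beyond being closed under subsets is invoked in this particular step; properties (1) and (3) of a primal are not needed for this direction of the inclusion.
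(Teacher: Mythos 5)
Your argument is correct and is exactly the one the paper uses (the same substitution of $A\cap V$ for the test neighborhood appears verbatim in the proof of Corollary~\ref{14}, which re-derives this inclusion): since the two sets $(A\cap V)^{c}\cup B^{c}$ and $(A\cap B)^{c}\cup V^{c}$ coincide by De Morgan, membership in $\mathcal{P}$ transfers immediately. (In fact not even heredity of $\mathcal{P}$ is needed here, since the sets are equal rather than merely nested.)
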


\begin{corollary}~\label{14}
Let $(X, \tau, \p)$  be a primal topological  space and $A, B\subseteq X$. If $A$ is open in $X$, then $A\cap  B^{\diamond}=A\cap(A\cap B)^{\diamond} \subseteq (A\cap B)^{\diamond}$.
\end{corollary}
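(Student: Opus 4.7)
The plan is to break the claimed chain of relations into three pieces. The last inclusion $A\cap(A\cap B)^{\diamond}\subseteq (A\cap B)^{\diamond}$ is immediate from $A\cap C\subseteq C$ for any set $C$, so the real content is the equality $A\cap B^{\diamond}=A\cap (A\cap B)^{\diamond}$, which I will establish by proving the two inclusions separately.

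For the inclusion $A\cap B^{\diamond}\subseteq A\cap (A\cap B)^{\diamond}$, I would simply invoke Theorem \ref{12}: since $A$ is open, $A\cap B^{\diamond}\subseteq (A\cap B)^{\diamond}$, and of course $A\cap B^{\diamond}\subseteq A$, so intersecting these two containments yields the desired inclusion. This step uses openness of $A$ in an essential way.

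For the reverse inclusion $A\cap (A\cap B)^{\diamond}\subseteq A\cap B^{\diamond}$, no openness hypothesis is needed; it comes from monotonicity of the $(\cdot)^{\diamond}$ operator. To check monotonicity, suppose $C\subseteq D$ and $x\in C^{\diamond}$. For any $U\in \tau(x)$ we have $C^{c}\cup U^{c}\in\mathcal{P}$; since $D^{c}\cup U^{c}\subseteq C^{c}\cup U^{c}$, the downward closure condition (2) of a primal (see Definition of primal) forces $D^{c}\cup U^{c}\in\mathcal{P}$, whence $x\in D^{\diamond}$. Applying this with $C=A\cap B\subseteq B=D$ yields $(A\cap B)^{\diamond}\subseteq B^{\diamond}$, and intersecting with $A$ gives exactly what we need.

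I do not foresee any substantive obstacle: the argument is a packaging of Theorem \ref{12} together with the straightforward monotonicity of $(\cdot)^{\diamond}$, which is in turn a one-line consequence of the defining properties of a primal. The only thing to be careful about is the direction of the set inclusions when complements are taken, since the downward closure property of $\mathcal{P}$ cooperates with complementation in the correct direction only because $C\subseteq D$ implies $D^{c}\subseteq C^{c}$.
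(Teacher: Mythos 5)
Your proposal is correct and follows essentially the same route as the paper: both arguments split the equality into the two inclusions, obtaining $A\cap B^{\diamond}\subseteq A\cap(A\cap B)^{\diamond}$ from the openness of $A$ via Theorem~\ref{12} (the paper re-runs the element-chase behind that theorem inline, while you simply cite it) and the reverse inclusion from monotonicity of $(\cdot)^{\diamond}$. Your version is, if anything, a slightly cleaner packaging of the same ideas, with monotonicity justified explicitly rather than asserted.
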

\begin{proof} We have $A\cap(A\cap B)^{\diamond}\subseteq A\cap  B^{\diamond}$. Let $x\in A\cap  B^{\diamond}$ and $V\in \tau(x)$. Then $A\cap V\in \tau(x)$ and $x\in B^{\diamond}$, then $(A\cap V)^{c}\cup B^{c}\in \mathcal{P}$ i.e. $(A\cap B)^{c}\cup V^{c}\in \mathcal{P}$, then $x\in (A\cap B)^{\diamond}$ and $x\in A\cap(A\cap B)^{\diamond}$. Hence by Theorem ~\ref{12}, we have $A\cap  B^{\diamond}=A\cap(A\cap B)^{\diamond} \subseteq (A\cap B)^{\diamond}$.
\end{proof}

\begin{theorem}$($\cite{aoi}$)$ \label{1a}
Let $(X,\tau,\mathcal{P})$ be a primal topological space. Then, the following statements hold for any two subsets of $A$ and $B$ of $X.$

$(1)$ if $A^c\in \tau,$ then $ A^{\diamond}\subseteq A,$

$(2)$ $\emptyset^{\diamond}= \emptyset$,

$(3)$ $cl(A^{\diamond})=A^{\diamond},$ 

$(4)$ $(A^{\diamond})^{\diamond}\subseteq A^{\diamond},$ 

$(5)$ if $A\subseteq B$, then $A^{\diamond}\subseteq B^{\diamond}$,

$(6)$ $A^{\diamond}\cup B^{\diamond}= (A\cup B)^{\diamond},$

%$(vii)$ $\Lambda_{\mathcal{M}}(A)\cap \Lambda_{\mathcal{M}}(B)\subseteq \Lambda_{\mathcal{M}}(A\cup B),$\\

$(7)$ $ (A\cap B)^{\diamond}\subseteq A^{\diamond}\cap B^{\diamond}.$
\end{theorem}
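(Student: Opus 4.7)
The strategy is to unpack each assertion directly from the definition $A^{\diamond}=\{x\in X:(\forall U\in\tau(x))(A^{c}\cup U^{c}\in\mathcal{P})\}$, reading the two primal axioms (and their contrapositive forms in Corollary~\ref{23}) as heredity and finite-intersection properties of the complement of $\mathcal{P}$. Items (1), (2), (5) and (7) are short and mechanical: for (2), the choice $U=X\in\tau(x)$ yields $\emptyset^{c}\cup X^{c}=X\notin\mathcal{P}$, so no point lies in $\emptyset^{\diamond}$; for (1), if $A^{c}\in\tau$ and $x\notin A$, then $U:=A^{c}\in\tau(x)$ forces $A^{c}\cup U^{c}=X\notin\mathcal{P}$, witnessing $x\notin A^{\diamond}$; for (5), $A\subseteq B$ gives $B^{c}\cup U^{c}\subseteq A^{c}\cup U^{c}$, and heredity of $\mathcal{P}$ transfers membership from the larger set to the smaller, so $A^{\diamond}\subseteq B^{\diamond}$; and (7) is then immediate from (5) applied to $A\cap B\subseteq A,B$.

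For (3) the plan is to show that $A^{\diamond}$ is already closed. Given $x\in cl(A^{\diamond})$ and $U\in\tau(x)$, pick $y\in U\cap A^{\diamond}$; since $U\in\tau(y)$ and $y\in A^{\diamond}$, one has $A^{c}\cup U^{c}\in\mathcal{P}$, and as this holds for every $U\in\tau(x)$, it follows that $x\in A^{\diamond}$. Item (4) is then a two-line corollary: by (3), $A^{\diamond}$ is closed, so applying (1) with $A^{\diamond}$ in place of $A$ yields $(A^{\diamond})^{\diamond}\subseteq A^{\diamond}$.

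The main obstacle is the nontrivial inclusion in (6). The easy direction $A^{\diamond}\cup B^{\diamond}\subseteq(A\cup B)^{\diamond}$ is immediate from monotonicity (5). For the reverse I will argue by contraposition: if $x\notin A^{\diamond}\cup B^{\diamond}$, pick $U,V\in\tau(x)$ with $A^{c}\cup U^{c}\notin\mathcal{P}$ and $B^{c}\cup V^{c}\notin\mathcal{P}$; by Corollary~\ref{23}(3) their intersection is also outside $\mathcal{P}$. The key computation is the set-theoretic inclusion
\[
(A^{c}\cup U^{c})\cap(B^{c}\cup V^{c})\ \subseteq\ (A\cup B)^{c}\cup(U\cap V)^{c},
\]
which one obtains by setting $Z:=U^{c}\cup V^{c}$ and invoking the distributive identity $(A^{c}\cup Z)\cap(B^{c}\cup Z)=(A^{c}\cap B^{c})\cup Z$. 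Combining this with Corollary~\ref{23}(2), which says supersets of non-members remain non-members, and taking $W:=U\cap V\in\tau(x)$, we conclude $(A\cup B)^{c}\cup W^{c}\notin\mathcal{P}$, so $x\notin(A\cup B)^{\diamond}$, as required.
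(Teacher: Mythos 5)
Your proof is correct throughout; the paper quotes Theorem \ref{1a} from the cited reference \cite{aoi} without supplying a proof, so there is nothing to compare against, but your arguments are the standard direct ones from the definition of $(\cdot)^{\diamond}$ and the primal axioms (with (4) cleanly reduced to (1) and (3)). In particular, the only delicate step --- the inclusion $(A^{c}\cup U^{c})\cap(B^{c}\cup V^{c})\subseteq(A\cup B)^{c}\cup(U\cap V)^{c}$ needed for the hard direction of (6) --- checks out, since the left side is contained in $(A^{c}\cup U^{c}\cup V^{c})\cap(B^{c}\cup U^{c}\cup V^{c})=(A^{c}\cap B^{c})\cup U^{c}\cup V^{c}$, after which Corollary \ref{23} finishes the argument exactly as you say.
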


\begin{theorem}$($\cite{aoi}$)$ \label{base}
Let $(X,\tau,\mathcal{P})$ be a primal topological space. Then, the family $\mathcal{B}_{\mathcal{P}}=\{T\cap P|T\in\tau$ and $ P\notin \mathcal{P}\}$ is a base for the primal topology $\tau^{\diamond}$ on $X.$
\end{theorem}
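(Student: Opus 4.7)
The plan is to verify the two defining properties of a base, using the characterization $A\in\tau^{\diamond}\iff (A^{c})^{\diamond}\subseteq A^{c}$, which follows directly from the definition of $\tau^{\diamond}$ via $cl^{\diamond}$. The key auxiliary fact I will use is item $(2)$ of Corollary~\ref{23}, namely that the complement of $\mathcal{P}$ is upward closed: if $P\notin\mathcal{P}$ and $P\subseteq Q$, then $Q\notin\mathcal{P}$.

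First I would show $\mathcal{B}_{\mathcal{P}}\subseteq\tau^{\diamond}$. Let $B=T\cap P$ with $T\in\tau$ and $P\notin\mathcal{P}$; I need to verify $(B^{c})^{\diamond}\cap B=\emptyset$. Given $x\in B$, take the neighborhood $U=T\in\tau(x)$. Then a straightforward distributive computation gives $B\cup U^{c}=(T\cap P)\cup T^{c}=P\cup T^{c}$, which contains $P$ and hence lies outside $\mathcal{P}$ by upward closure. Unwinding the definition of $(\cdot)^{\diamond}$, this exhibits a neighborhood $U$ of $x$ witnessing $x\notin (B^{c})^{\diamond}$, so $B\in\tau^{\diamond}$.

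Next I would show that every $V\in\tau^{\diamond}$ is a union of elements of $\mathcal{B}_{\mathcal{P}}$. Fix $x\in V$. Since $(V^{c})^{\diamond}\subseteq V^{c}$, we have $x\notin(V^{c})^{\diamond}$, so by definition there exists $T\in\tau(x)$ with $V\cup T^{c}\notin\mathcal{P}$. Set $P:=V\cup T^{c}$; then $P\notin\mathcal{P}$, so $T\cap P\in\mathcal{B}_{\mathcal{P}}$. A short calculation yields $T\cap P=T\cap(V\cup T^{c})=T\cap V$, which both contains $x$ and lies inside $V$. Thus every point of $V$ sits in a basic set contained in $V$, finishing the argument.

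I do not anticipate any serious obstacle: the statement reduces entirely to unwinding the definition of $(\cdot)^{\diamond}$ together with the upward closure of the complement of a primal. The only place requiring even mild care is recognizing that the natural choice of neighborhood in both halves is simply the open set $T$ appearing in the basic element, and that $T\cap(V\cup T^{c})$ collapses to $T\cap V$ so that the basic neighborhood actually lies inside $V$.
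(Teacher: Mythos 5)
Your argument is correct and complete: both halves reduce, as you say, to unwinding the definition of $(\cdot)^{\diamond}$ together with the upward closure of the complement of a primal (Corollary~\ref{23}(2)), and the two set-theoretic identities $(T\cap P)\cup T^{c}=P\cup T^{c}$ and $T\cap(V\cup T^{c})=T\cap V$ are exactly the right computations. The paper imports this theorem from \cite{aoi} without reproducing a proof, so there is nothing to compare against, but your proof is the standard one and has no gaps.
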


\section{Main Results}

In this section, we introduce some results related to $\Psi$ operator using primal on a primal topological space $(X, \tau, \p)$.\\ 
\begin{theorem}~\label{7}
Let $(X, \tau, \p)$  be a primal topological  space. If $C(X)-\{X\}\subseteq \p,$ then $U\subseteq U^{\diamond}$ for all $U\in \tau.$	
\end{theorem}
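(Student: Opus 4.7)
The goal is to show that under the hypothesis $C(X)-\{X\}\subseteq\p$, every point $x$ of an open set $U$ lies in $U^{\diamond}$, i.e.\ that for every $V\in\tau(x)$ one has $U^{c}\cup V^{c}\in\p$.

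My plan is to fix $U\in\tau$ and take an arbitrary $x\in U$. To verify membership in $U^{\diamond}$ directly from the definition, I would pick an arbitrary $V\in\tau(x)$ and study the set $U^{c}\cup V^{c}$. The key observation is that by De Morgan's laws, $U^{c}\cup V^{c}=(U\cap V)^{c}$, which is the complement of an open set and hence closed, so it lies in $C(X)$.

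Next I would check that this closed set is a proper subset of $X$. Since $x\in U$ (because $x\in U$ to begin with) and $x\in V$ (because $V\in\tau(x)$), the intersection $U\cap V$ contains $x$ and is therefore nonempty; consequently $(U\cap V)^{c}\neq X$. Hence $U^{c}\cup V^{c}\in C(X)-\{X\}$, and the hypothesis $C(X)-\{X\}\subseteq\p$ immediately yields $U^{c}\cup V^{c}\in\p$. Since $V$ was arbitrary in $\tau(x)$, this gives $x\in U^{\diamond}$ by the definition of the diamond operator, and since $x$ was arbitrary in $U$ we conclude $U\subseteq U^{\diamond}$.

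There is essentially no obstacle here: the argument is a one-line unpacking of the definition, and the only substantive step is recognizing that $U^{c}\cup V^{c}$ is a proper closed set precisely because $x$ witnesses the nonemptiness of $U\cap V$. No use of the deeper properties from Theorem~\ref{1a} or Theorem~\ref{12} is needed; only the definition of $(\cdot)^{\diamond}$ and the hereditary property of $\p$ (which is actually not even invoked, since $U^{c}\cup V^{c}$ itself is already shown to be in $\p$) come into play.
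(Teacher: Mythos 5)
Your proof is correct, and it is more direct than the one in the paper. You verify $x\in U^{\diamond}$ straight from the definition: for any $V\in\tau(x)$, the set $U^{c}\cup V^{c}=(U\cap V)^{c}$ is closed and, since $x\in U\cap V$, it is a proper subset of $X$, hence lies in $C(X)-\{X\}\subseteq\p$. The paper instead first proves the special case $X^{\diamond}=X$ (by the same underlying observation, applied to $V^{c}$ alone: if $x\notin X^{\diamond}$ then some $V\in\tau(x)$ has $V^{c}\notin\p$, contradicting that $V^{c}$ is a proper closed set) and then invokes Theorem~\ref{12} to get $U=U\cap X^{\diamond}\subseteq (U\cap X)^{\diamond}=U^{\diamond}$. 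Both arguments rest on the same key fact, but yours dispenses with Theorem~\ref{12} entirely and needs no separate treatment of $U=\emptyset$ (which is vacuous in your formulation), while the paper's route has the mild side benefit of isolating the reusable fact $X^{\diamond}=X$. Your closing remark that heredity of $\p$ is not needed is also accurate.
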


\begin{proof}
	In case $U =\emptyset$, we obviously have $U^{\diamond}=\emptyset=U$. Now note that
if $C(X)-\{X\}\subseteq \p$, then $X^{\diamond}= X$. In fact $x \notin X^{\diamond},$ then there is  $V\in \tau(x)$ such that
$V^{c} \cup X^{c} \notin\p$. Hence, $V^{c}\notin\p$ is a contradiction.
Now by using Theorem \ref{12}, we have for any $U \in \tau$, $U=U \cap X^{\diamond} \subseteq( U \cap X)^{\diamond}=U^{\diamond}$. Thus $U\subseteq U^{\diamond}$.
\end{proof}
%%%%%%%%%%%%%%%%%%%%%%%%%%%%%%%%%%%%%%%%%%%%%%%%%%%%%%%%%%%%%
%%%%%%%%%%%%%%%%%%%%%%%%%%%%%%%%%%%%%%%%%%%%%%%%%%%%%%%%%%%%%
%%%%%%%%%%%%%%%%%%%%%%%%%%%%%%%%%%%%%%%%%%%%%%%%%%%%%%%%%%%%%%%
%%%%%%%%%%%%%%%%%%%%%%%%%%%%%%%%%%%%%%%%%%%%%%%%%%%%%%%%%%%%%%%%%
%%%%%%%%%%%%%%%%%%%%%%%%%%%%%%%%%%%%%%%%%%%%%%%%%%%%%%%
\begin{lemma}~\label{11}
Let $(X, \tau, \p)$  be a primal topological  space.  If $A^{c}\notin  \p$, then $ A^{\diamond}=\emptyset.$
\end{lemma}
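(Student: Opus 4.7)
The plan is to prove the contrapositive-style statement directly from the definition of $A^{\diamond}$ together with the subset-monotonicity of the primal (Corollary \ref{23}(2)). The statement says: if $A^c$ already fails to lie in $\mathcal{P}$, then \emph{no} point of $X$ can witness membership in $A^{\diamond}$.

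The key observation is the set-theoretic inclusion $A^c \subseteq A^c \cup U^c$, valid for every $U$. Since $\mathcal{P}$ is downward closed under $\subseteq$, its complement (in $2^X$) is upward closed; equivalently, by Corollary \ref{23}(2), if $A^c \notin \mathcal{P}$, then every superset of $A^c$ lies outside $\mathcal{P}$. In particular, $A^c \cup U^c \notin \mathcal{P}$ for every open set $U$.

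With this in hand, the proof proceeds as follows. First I would fix an arbitrary point $x \in X$ and an arbitrary open neighborhood $U \in \tau(x)$. Applying the inclusion $A^c \subseteq A^c \cup U^c$ and Corollary \ref{23}(2) to the hypothesis $A^c \notin \mathcal{P}$, I conclude $A^c \cup U^c \notin \mathcal{P}$. Consulting the defining condition of $A^{\diamond}$, namely $x \in A^{\diamond}$ iff $A^c \cup U^c \in \mathcal{P}$ for \textbf{every} $U \in \tau(x)$, the existence of even one $U$ (and here every $U$ works) for which $A^c \cup U^c \notin \mathcal{P}$ already prevents $x$ from lying in $A^{\diamond}$. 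Since $x$ was arbitrary, $A^{\diamond} = \emptyset$.

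There is no real obstacle here; the argument is a one-line application of definitions combined with the monotonicity property. The only point worth flagging is the easy-to-overlook fact that the relevant monotonicity of $\mathcal{P}$ points in the right direction: being \emph{outside} the primal is preserved under taking supersets, which is exactly the form the argument requires.
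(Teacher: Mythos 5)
Your proof is correct and uses essentially the same idea as the paper's: the inclusion $A^c \subseteq A^c \cup U^c$ together with Corollary~\ref{23}(2) shows $A^c \cup U^c \notin \mathcal{P}$ for every open $U$, so no point can belong to $A^{\diamond}$. The paper merely phrases this as a proof by contradiction rather than directly; the content is identical.
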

\begin{proof} Suppose that $x\in A^{\diamond}$. Then, for any open set $U$ containing $x$ we have  $U^{c}\cup A^{c}\in \p$. Since $A^{c}\notin \p$, $U^{c}\cup A^{c}\notin \p$ for some  open sets $U$ containing $x$. This is a contradiction. Hence, $A^{\diamond}=\emptyset$.
\end{proof}

%%%%%%%%%%%%%%%%%%%%%%%%%%%%%%%%%%%%%%%%%%%%%%%%%%%%%%%%%%%%%
%%%%%%%%%%%%%%%%%%%%%%%%%%%%%%%%%%%%%%%%%%%%%%%%%%%%%%%%%%%%%
%%%%%%%%%%%%%%%%%%%%%%%%%%%%%%%%%%%%%%%%%%%%%%%%%%%%%%%%%%%%%%%
%%%%%%%%%%%%%%%%%%%%%%%%%%%%%%%%%%%%%%%%%%%%%%%%%%%%%%%%%%%%%%%%%
%%%%%%%%%%%%%%%%%%%%%%%%%%%%%%%%%%%%%%%%%%%%%%%%%%%%%%%
\begin{lemma}~\label{10}
Let $(X, \tau, \p)$  be a primal topological  space  and $A, B $ be subsets of $ X$. Then, $A^{\diamond}- B^{\diamond}=(A-B)^{\diamond}-B^{\diamond}$.
\end{lemma}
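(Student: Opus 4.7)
The plan is to exploit the additivity of the diamond operator (Theorem \ref{1a}(6)) together with its monotonicity (Theorem \ref{1a}(5)) to get both inclusions essentially at once, rather than chasing points through neighborhood conditions.

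First I would write $A$ as the disjoint union $A=(A\setminus B)\cup(A\cap B)$. Applying Theorem \ref{1a}(6) to this decomposition yields
\[
A^{\diamond}=(A\setminus B)^{\diamond}\cup(A\cap B)^{\diamond}.
\]
Next I would use Theorem \ref{1a}(5): since $A\cap B\subseteq B$, we obtain $(A\cap B)^{\diamond}\subseteq B^{\diamond}$. This is the crucial observation, because it means the second piece of the above union disappears as soon as we subtract $B^{\diamond}$.

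Concretely, subtracting $B^{\diamond}$ from both sides and distributing the set difference over the union gives
\[
A^{\diamond}\setminus B^{\diamond}
=\bigl[(A\setminus B)^{\diamond}\setminus B^{\diamond}\bigr]\cup\bigl[(A\cap B)^{\diamond}\setminus B^{\diamond}\bigr]
=(A\setminus B)^{\diamond}\setminus B^{\diamond},
\]
because $(A\cap B)^{\diamond}\setminus B^{\diamond}=\emptyset$ by the previous step. This finishes the proof in one shot, handling both inclusions simultaneously.

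There is no real obstacle here; the only thing to be mildly careful about is that the decomposition $A=(A\setminus B)\cup(A\cap B)$ must be a genuine union (which it is) so that Theorem \ref{1a}(6) applies cleanly. A purely pointwise alternative would work as well: for the inclusion $A^{\diamond}\setminus B^{\diamond}\subseteq(A\setminus B)^{\diamond}\setminus B^{\diamond}$ one could take $x\in A^{\diamond}\setminus B^{\diamond}$, pick $V\in\tau(x)$ witnessing $x\notin B^{\diamond}$ (so $V^{c}\cup B^{c}\notin\p$), and then for any $U\in\tau(x)$ combine $U\cap V\in\tau(x)$ with $x\in A^{\diamond}$ to show $(U\cap V)^{c}\cup A^{c}\in\p$, and conclude via Corollary \ref{23} that $U^{c}\cup(A\setminus B)^{c}\in\p$. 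But the algebraic route above is shorter and cleaner, so that is the one I would present.
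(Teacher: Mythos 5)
Your proof is correct and takes essentially the same route as the paper: the same decomposition $A=(A\setminus B)\cup(A\cap B)$, followed by additivity (Theorem~\ref{1a}(6)) and monotonicity (Theorem~\ref{1a}(5)). The only difference is cosmetic --- you obtain the equality in one step by distributing the set difference over the union, whereas the paper proves the two inclusions separately.
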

\begin{proof}
We have by Theorem \ref{1a}, $A^{\diamond}=[(A-B)\cup (A\cap B)]^{\diamond}=(A-B)^{\diamond} \cup (A\cap B)^{\diamond} \subseteq (A-B)^{\diamond} \cup B^{\diamond}$. Thus  $A^{\diamond}-B^{\diamond}\subseteq (A-B)^{\diamond}-B^{\diamond}$. Again by Theorem \ref{1a}, $(A-B)^{\diamond}\subseteq A^{\diamond}$ and hence, $(A-B)^{\diamond}-B^{\diamond}\subseteq A^{\diamond}-B^{\diamond}$.  Hence, $A^{\diamond}-B^{\diamond}=(A-B)^{\diamond}-B^{\diamond}$.
\end{proof}
%%%%%%%%%%%%%%%%%%%%%%%%%%%%%%%%%%%%%%%%%%%%%%%%%%%%%%%%%%%%%
%%%%%%%%%%%%%%%%%%%%%%%%%%%%%%%%%%%%%%%%%%%%%%%%%%%%%%%%%%%%%
%%%%%%%%%%%%%%%%%%%%%%%%%%%%%%%%%%%%%%%%%%%%%%%%%%%%%%%%%%%%%%%
%%%%%%%%%%%%%%%%%%%%%%%%%%%%%%%%%%%%%%%%%%%%%%%%%%%%%%%%%%%%%%%%%
%%%%%%%%%%%%%%%%%%%%%%%%%%%%%%%%%%%%%%%%%%%%%%%%%%%%%%%

\begin{corollary}~\label{5}
Let  $(X, \tau, \p)$  be a primal topological  space and $A, B $ be subsets of $X$ with $B^{c}\notin \p$. Then  $(A\cup B)^{\diamond}=A^{\diamond}=(A-B)^{\diamond}$.
\end{corollary}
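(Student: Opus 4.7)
The plan is to exploit the hypothesis $B^{c}\notin\mathcal{P}$ to collapse $B^{\diamond}$ to the empty set, and then feed this into Theorem \ref{1a}(6) and Lemma \ref{10} to obtain the two equalities, respectively.

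First I would invoke Lemma \ref{11} directly with $B$ in place of $A$: since $B^{c}\notin\mathcal{P}$, it yields $B^{\diamond}=\emptyset$. This is the only place the hypothesis really gets used; everything afterwards is a formal manipulation of the $(\cdot)^{\diamond}$ operator.

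Next, for the equality $(A\cup B)^{\diamond}=A^{\diamond}$, I would apply the additivity statement in Theorem \ref{1a}(6), which gives $(A\cup B)^{\diamond}=A^{\diamond}\cup B^{\diamond}$, and then substitute $B^{\diamond}=\emptyset$. For the equality $A^{\diamond}=(A-B)^{\diamond}$, I would use Lemma \ref{10}, which states $A^{\diamond}-B^{\diamond}=(A-B)^{\diamond}-B^{\diamond}$; with $B^{\diamond}=\emptyset$ both set differences reduce to the whole sets, yielding $A^{\diamond}=(A-B)^{\diamond}$.

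There is no real obstacle here: once Lemma \ref{11} is applied to $B$, the rest is a two-line chain of substitutions into already-proved identities. The only point to be slightly careful about is not confusing the roles of $A$ and $B$ when invoking Lemma \ref{11} (the hypothesis is on the complement of $B$, not of $A$), and noting that the result says nothing about $A^{c}$, so $A^{\diamond}$ itself need not be empty.
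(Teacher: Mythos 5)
Your proposal is correct and follows exactly the paper's own argument: apply Lemma~\ref{11} to $B$ to get $B^{\diamond}=\emptyset$, then use Theorem~\ref{1a}(6) for $(A\cup B)^{\diamond}=A^{\diamond}$ and Lemma~\ref{10} for $A^{\diamond}=(A-B)^{\diamond}$. Nothing to add.
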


\begin{proof}
Since $B^{c}\notin \p$, thus $B^{\diamond}=\emptyset$. Again by Lemma \ref{10}, $A^{\diamond}=(A-B)^{\diamond}$ and by Theorem \ref{1a},  $(A\cup B)^{\diamond}=A^{\diamond}\cup B^{\diamond}=A^{\diamond}$
\end{proof}

%%%%%%%%%%%%%%%%%%%%%%%%%%%%%%%%%%%%%%%%%%%%%%%%%%%%%%%%%%%%%
%%%%%%%%%%%%%%%%%%%%%%%%%%%%%%%%%%%%%%%%%%%%%%%%%%%%%%%%%%%%%
%%%%%%%%%%%%%%%%%%%%%%%%%%%%%%%%%%%%%%%%%%%%%%%%%%%%%%%%%%%%%%%
%%%%%%%%%%%%%%%%%%%%%%%%%%%%%%%%%%%%%%%%%%%%%%%%%%%%%%%%%%%%%%%%%
%%%%%%%%%%%%%%%%%%%%%%%%%%%%%%%%%%%%%%%%%%%%%%%%%%%%%%%

%%%%%%%%%%%%%%%%%%%%%%%%%%%%%%%%%%%%%%%%%%%%%%%%%%%%%%%%%%%%%%%%%
%%%%%%%%%%%%%%%%%%%%%%%%%%%%%%%%%%%%%%%%%%%%%%%%%%%%%%%

\begin{definition}
Let $(X,\tau, \p)$ be a primal topological  space. An operator $\Psi:2^{X}\rightarrow  2^{X}$ defined  as $\Psi(A)=\{x\in X: (\exists U\in \tau(x))((U-A)^{c} \notin \p) \}$ for every $A\subseteq X.$
\end{definition}
%%%%%%%%%%%%%%%%%%%%%%%%%%%%%%%%%%%%%%%%%%%%%%%%%%%%%%%%%%%%%
%%%%%%%%%%%%%%%%%%%%%%%%%%%%%%%%%%%%%%%%%%%%%%%%%%%%%%%%%%%%%
%%%%%%%%%%%%%%%%%%%%%%%%%%%%%%%%%%%%%%%%%%%%%%%%%%%%%%%%%%%%%%%
%%%%%%%%%%%%%%%%%%%%%%%%%%%%%%%%%%%%%%%%%%%%%%%%%%%%%%%%%%%%%%%%%
%%%%%%%%%%%%%%%%%%%%%%%%%%%%%%%%%%%%%%%%%%%%%%%%%%%%%%%
Several basic facts concerning the behavior of the operator $\Psi$ are included in the following theorem.
\vspace{2mm}
\begin{theorem}~\label{3}
Let $(X, \tau, \p)$ be a primal topological  space. Then, the following properties hold:
\begin{enumerate}
\item if $A\subseteq X$, then $\Psi(A)=X-(X-A)^{\diamond}$,
  \item if $A\subseteq X$, then $\Psi(A)$ is open,
  \item if $A\subseteq B$, then $\Psi(A)\subseteq \Psi(B)$,
  \item  if $A, B\subseteq X$, then $\Psi(A\cap B)=\Psi(A)\cap \Psi(B)$,
  \item if $U\in \tau^{\diamond}$, then $U\subseteq \Psi(U)$,
  \item if $A\subseteq X$, then $\Psi(A)\subseteq \Psi(\Psi(A))$,
  \item if $A\subseteq X$, then $\Psi(A)=\Psi(\Psi(A))$ if and only if\\ $(X-A)^{\diamond}=((X-A)^{\diamond})^{\diamond}$,
  \item if $A^{c}\notin \p$, then $\Psi(A)=X-X^{\diamond}$,
  \item if $A\subseteq X $, then $A\cap \Psi(A)=int^{\diamond}(A)$,
  \item if $A\subseteq X $ and $I^{c}\notin \p$, then $\Psi(A-I)= \Psi(A)$,
  \item if $A\subseteq X $ and $I^{c}\notin \p$, then $\Psi(A\cup I)= \Psi(A)$,
  \item if $[(A-B) \cup (B-A)]^{c}\notin \p$, then $\Psi(A)= \Psi(B)$.
\end{enumerate}
\end{theorem}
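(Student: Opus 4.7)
The linchpin of the whole theorem is part (1), which converts every remaining statement into a question about the already well-studied operator $\diamond$. I would first observe the identity $(U-A)^{c}=U^{c}\cup A$, so that the condition $(U-A)^{c}\notin\p$ appearing in the definition of $\Psi(A)$ is exactly the negation of $U^{c}\cup(X-A)^{c}\in\p$, which is the defining condition for $x\in(X-A)^{\diamond}$. Passing to complements gives $\Psi(A)=X-(X-A)^{\diamond}$, i.e.\ (1).

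With (1) in hand, items (2)--(7) become essentially formal consequences of Theorem \ref{1a} applied to $X-A$ and then complemented. Specifically: (2) follows because Theorem \ref{1a}(3) makes $(X-A)^{\diamond}$ closed, so its complement is open; (3) is the monotonicity clause Theorem \ref{1a}(5); (4) uses $X-(A\cap B)=(X-A)\cup(X-B)$ together with the additivity Theorem \ref{1a}(6); (5) unfolds the definition of $\tau^{\diamond}$-openness, which says $cl^{\diamond}(X-U)=X-U$ and hence $(X-U)^{\diamond}\subseteq X-U$; (6) combines (2) with the fact that $\tau\subseteq\tau^{\diamond}$ (so $\Psi(A)$ is $\tau^{\diamond}$-open) and then invokes (5); and (7) falls out of the direct rewrite $\Psi(\Psi(A))=X-((X-A)^{\diamond})^{\diamond}$ compared with $\Psi(A)=X-(X-A)^{\diamond}$.

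The final group (8)--(12) exploits Lemma \ref{11} and Corollary \ref{5}, which together say that sets $B$ with $B^{c}\notin\p$ are \emph{negligible} for $\diamond$ in the sense that $B^{\diamond}=\emptyset$ and $(A\cup B)^{\diamond}=A^{\diamond}=(A-B)^{\diamond}$. For (8), take $A=X$, $B=A$ in Corollary \ref{5} to get $(X-A)^{\diamond}=X^{\diamond}$, hence $\Psi(A)=X-X^{\diamond}$. For (9), just unfold $int^{\diamond}(A)=X-cl^{\diamond}(X-A)=A\cap(X-(X-A)^{\diamond})$. For (10), write $X-(A-I)=(X-A)\cup I$, apply additivity to get $(X-A)^{\diamond}\cup I^{\diamond}$, and kill $I^{\diamond}$ by Lemma \ref{11}. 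For (11), write $X-(A\cup I)=(X-A)-I$ and apply Corollary \ref{5} directly. Finally (12) uses (11) twice: from $[(A-B)\cup(B-A)]^{c}\notin\p$ and Corollary \ref{23}(2) (subset-monotonicity of ``not in $\p$'') one extracts $(A-B)^{c}\notin\p$ and $(B-A)^{c}\notin\p$, so
\[
\Psi(A)=\Psi((A\cap B)\cup(A-B))=\Psi(A\cap B)=\Psi((A\cap B)\cup(B-A))=\Psi(B).
\]

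There is no conceptual obstacle; the proof is a careful translation exercise once (1) is nailed down. The only places requiring a moment's care are (5), where one must correctly unfold the definition of $\tau^{\diamond}$ into a statement about $(X-U)^{\diamond}$, and (12), where one must use Corollary \ref{23}(2) to pass from the symmetric difference being ``$\p$-negligible'' to each of its halves being so.
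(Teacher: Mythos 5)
Your proposal is correct and follows the paper's overall strategy exactly: establish (1) and then translate everything into statements about $\diamond$. Two local steps differ from the paper's proof, both in your favor as far as economy goes. For (4), the paper proves the nontrivial inclusion $\Psi(A)\cap\Psi(B)\subseteq\Psi(A\cap B)$ by an element-chasing argument (intersecting the two witnessing neighborhoods and invoking Corollary~\ref{23}), whereas you get the full equality formally from (1) together with $X-(A\cap B)=(X-A)\cup(X-B)$ and the additivity $(C\cup D)^{\diamond}=C^{\diamond}\cup D^{\diamond}$ of Theorem~\ref{1a}(6); both are valid. For (9), the paper argues through the base $\mathcal{B}_{\mathcal{P}}$ of Theorem~\ref{base} in both directions, while you simply unfold $int^{\diamond}(A)=X-cl^{\diamond}(X-A)$; this is cleaner but tacitly uses that $cl^{\diamond}(B)=B\cup B^{\diamond}$ really is the closure operator of $\tau^{\diamond}$ (equivalently, that $cl^{\diamond}$ is a Kuratowski operator generating $\tau^{\diamond}$), a fact from the cited source that does follow from Theorem~\ref{1a} but which you should cite or verify rather than assume silently. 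Your treatment of (12) decomposes via $A\cap B$ instead of the paper's $B=(A-I)\cup J$, which is an immaterial variation. No gaps.
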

 \begin{proof}
(1) Let $x\in \Psi(A)$, then there exists $U\in \tau(x)$ such that $U^{c}\cup A=(U\cap (X-A))^{c}=(U-A)^{c}\notin \p$,
 then $x\notin (X-A)^{\diamond}$ and $x\in X-(X-A)^{\diamond}$.
Conversely, let  $x\in X-(X-A)^{\diamond}$, then $x\notin (X-A)^{\diamond}$, then there exists $U\in \tau(x)$ such that $U^{c}\cup (X-A)^{c}=(U-A)^{c}\notin \p$. Hence, $x\in \Psi(A)$ and $\Psi(A)=X-(X-A)^{\diamond}$.
 %%%%%%%%%%%%%%%%%%%%%%%%%%%%%%%%%%%%%%%%%%%%%%%%%%%%%%%%%%%%%
 
(2)  This follows from (3) of Theorem \ref{1a}.
%%%%%%%%%%%%%%%%%%%%%%%%%%%%%%%%%%%%%%%%%%%%%%%%%%%%%%%%%%%%%

(3)  This follows from (5) of Theorem \ref{1a}.
%%%%%%%%%%%%%%%%%%%%%%%%%%%%%%%%%%%%%%%%%%%%%%%%%%%%%%%%%%%%%

(4) It follows from (3) that $\Psi(A\cap B)\subseteq\Psi(A)$ and $\Psi(A\cap B)\subseteq \Psi(B)$. Hence, $\Psi(A\cap B)\subseteq\Psi(A)\cap \Psi(B)$. Now, let $x\in \Psi(A)\cap \Psi(B)$. Then, there exist $U, V\in \tau(x)$ such that $(U-A)^{c}\notin \p$ and  $(V-B)^{c}\notin \p$. Let $G=U\cap V \in \tau(x)$ and we have $(G - A)^{c}\notin \p$ and $(G- B)^{c}\notin \p$ by heredity. Thus $[G-(A\cap B)]^{c}=(G-A)^{c}\cap (G-B)^{c}\notin \p$ by Corollary ~\ref{23}, and hence, $x\in \Psi(A\cap B)$. We have shown $\Psi(A)\cap \Psi(B)\subseteq \Psi(A\cap B)$ and the proof is completed.
%%%%%%%%%%%%%%%%%%%%%%%%%%%%%%%%%%%%%%%%%%%%%%%%%%%%%%%%%%%%%

(5) If $U\in \tau^{\diamond}$, then  $(X-U)^{\diamond}\subseteq X-U.$ Hence, $U\subseteq X-(X-U)^{\diamond}=\Psi(U)$.
%%%%%%%%%%%%%%%%%%%%%%%%%%%%%%%%%%%%%%%%%%%%%%%%%%%%%%%%%%%%%

(6) It follows from (2) and (5).
%%%%%%%%%%%%%%%%%%%%%%%%%%%%%%%%%%%%%%%%%%%%%%%%%%%%%%%%%%%%%

(7) It follows from the facts:
\begin{enumerate}
  \item[(a)] $\Psi(A)=X-(X-A)^{\diamond}$.
  \item[(b)] $\Psi(\Psi(A))=X-[X-(X-(X-A)^{\diamond})]^{\diamond}=X-((X-A)^{\diamond})^{\diamond}$.
\end{enumerate}
%%%%%%%%%%%%%%%%%%%%%%%%%%%%%%%%%%%%%%%%%%%%%%%%%%%%%%%%%%%%%

(8) By Corollary ~\ref{5},  we obtain that $(X-A)^{\diamond}=X^{\diamond}$ if $A^{c}\notin \p$. Then, $\Psi(A)=X-(X-A)^{\diamond}=X-X^{\diamond}.$
%%%%%%%%%%%%%%%%%%%%%%%%%%%%%%%%%%%%%%%%%%%%%%%%%%%%%%%%%%%%%

(9) If $x\in A\cap \Psi(A)$, then $x\in A$ and there exists $U_{x}\in \tau(x)$ such that $(U_{x}-A)^{c}\notin \p$. Then by Theorem ~\ref{base}, $U_{x}\cap (U_{x}-A)^{c}$ is a $\tau^{\diamond}$-open neighborhood of $x$ and $x\in int^{\diamond}(A)$. On the other hand, if $x\in int^{\diamond}(A)$,
there exists a basic $\tau^{\diamond}$-open neighborhood $V_{x}\cap I$ of $x$, where $V_{x}\in \tau$ and  $I\notin \p$, such that $x\in V_{x}\cap I\subseteq A$  which implies $I\subseteq (V_{x}- A)^{c}$ and hence, $(V_{x}- A)^{c} \notin \p$. Hence, $x\in A\cap \Psi(A)$.
%%%%%%%%%%%%%%%%%%%%%%%%%%%%%%%%%%%%%%%%%%%%%%%%%%%%%%%%%%%%%

(10) This follows from Corollary ~\ref{5}  and  $\Psi(A-I)=X-[X-(A-I)]^{\diamond}=X-[(X-A)\cup I]^{\diamond}=X-(X-A)^{\diamond}=\Psi(A)$.
%%%%%%%%%%%%%%%%%%%%%%%%%%%%%%%%%%%%%%%%%%%%%%%%%%%%%%%%%%%%%

(11) This follows from Corollary ~\ref{5}  and  $\Psi(A \cup I)=X-[X-(A\cup I)]^{\diamond}=X-[(X-A)- I]^{\diamond}=X-(X-A)^{\diamond}=\Psi(A)$.
%%%%%%%%%%%%%%%%%%%%%%%%%%%%%%%%%%%%%%%%%%%%%%%%%%%%%%%%%%%%%

(12) Assume $[(A-B)\cup (B-A)]^{c}\notin \p$. Let $A-B=I$ and $B-A=J$. Observe that $I^{c}, J^{c} \notin \p$ by heredity. Also, observe that $B=(A-I)\cup J$. Thus,
$\Psi(A)=\Psi (A-I)=\Psi[(A-I)\cup J]=\Psi(B)$ by (10) and (11).
\end{proof}
\vspace{2mm}
\begin{corollary}
Let $(X, \tau, \p)$ be a primal topological space. Then, $U\subseteq \Psi(U)$ for every open set $U\in  \tau$.
\end{corollary}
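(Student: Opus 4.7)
The statement is a corollary of Theorem~\ref{3}, specifically part (5), which asserts $U\subseteq \Psi(U)$ whenever $U\in\tau^{\diamond}$. So the natural plan is to reduce the claim for $\tau$-open sets to the already-established claim for $\tau^{\diamond}$-open sets by verifying the containment $\tau\subseteq \tau^{\diamond}$.

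To see $\tau\subseteq \tau^{\diamond}$, I would invoke Theorem~\ref{base}: the family $\mathcal{B}_{\mathcal{P}}=\{T\cap P\mid T\in\tau,\ P\notin\mathcal{P}\}$ is a base for $\tau^{\diamond}$. Since the primal axiom forces $X\notin\mathcal{P}$, for any $U\in\tau$ we may write $U=U\cap X$, which lies in $\mathcal{B}_{\mathcal{P}}$ and hence in $\tau^{\diamond}$. Applying Theorem~\ref{3}(5) then yields $U\subseteq \Psi(U)$, completing the argument.

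A second, more direct route I would mention as a sanity check is simply to compute from the definition: given $U\in\tau$ and $x\in U$, take the witness neighborhood $V=U\in\tau(x)$; then $V-U=\emptyset$, so $(V-U)^{c}=X\notin\mathcal{P}$, proving $x\in \Psi(U)$ straight from the definition of $\Psi$. This bypasses Theorem~\ref{base} altogether and is the shortest proof available.

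There is essentially no obstacle here; the only conceptual point worth flagging is that the axiom $X\notin\mathcal{P}$ (part of the very definition of a primal) is what makes the reduction work — in particular, it is what guarantees $\tau\subseteq \tau^{\diamond}$. I would therefore make that step explicit in the write-up so the reader sees where the primal axiom enters.
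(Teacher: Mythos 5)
Your proposal is correct, and both of your routes go through. The paper's own proof is a third, equally short variant: it invokes the identity $\Psi(U)=X-(X-U)^{\diamond}$ from part (1) of Theorem~\ref{3} and then uses that $(X-U)^{\diamond}\subseteq X-U$ because $X-U$ is closed (part (1) of Theorem~\ref{1a}), so complementing gives $U\subseteq\Psi(U)$. Your second, ``direct'' argument is essentially this same computation unwound to the definition of $\Psi$: for $x\in U$ the set $U$ itself is the witnessing neighborhood, since $(U-U)^{c}=X\notin\mathcal{P}$; this is the most elementary version and uses nothing beyond the primal axiom $X\notin\mathcal{P}$. Your first route --- establishing $\tau\subseteq\tau^{\diamond}$ via the base $\mathcal{B}_{\mathcal{P}}$ (writing $U=U\cap X$ with $X\notin\mathcal{P}$) and then quoting part (5) of Theorem~\ref{3} --- is a genuinely different organization of the same content; it buys you the slightly stronger observation that $\tau\subseteq\tau^{\diamond}$, which the paper never states explicitly but uses later (e.g.\ in Theorem~\ref{8}, ``$\tau^{\diamond}$ is finer than $\tau$''), so making that containment explicit is a worthwhile addition. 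In all three versions the point where the primal axiom $X\notin\mathcal{P}$ enters is the same, and you are right to flag it.
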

\begin{proof}
We know that $\Psi(U)=X-(X-U)^{\diamond}$. Now $(X-U)^{\diamond}\subseteq cl(X-U)=X-U$,  since $X-U$ is closed. Therefore, $U= X-(X-U)\subseteq X-(X-U)^{\diamond}=\Psi(U)$.
\end{proof}
%%%%%%%%%%%%%%%%%%%%%%%%%%%%%%%%%%%%%%%%%%%%%%%%%%%%%%%%%%%%%
%%%%%%%%%%%%%%%%%%%%%%%%%%%%%%%%%%%%%%%%%%%%%%%%%%%%%%%%%%%%%
%%%%%%%%%%%%%%%%%%%%%%%%%%%%%%%%%%%%%%%%%%%%%%%%%%%%%%%%%%%%%%%
%%%%%%%%%%%%%%%%%%%%%%%%%%%%%%%%%%%%%%%%%%%%%%%%%%%%%%%%%%%%%%%%%
%%%%%%%%%%%%%%%%%%%%%%%%%%%%%%%%%%%%%%%%%%%%%%%%%%%%%%%
\vspace{2mm}
\begin{theorem}~\label{13}
Let $(X, \tau, \p)$ be a primal topological  space and $A\subseteq X$. Then, the following properties hold:
 \begin{enumerate}
   \item $\Psi (A)=\bigcup  \{U\in  \tau : (U-A)^{c}\notin \p\}$,
   \item $\Psi(A)\supseteq\bigcup  \{U\in  \tau : (U-A)^{c}\cup (A-U)^{c}\notin \p\}$.
 \end{enumerate}
\end{theorem}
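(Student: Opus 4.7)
The plan is to obtain (1) directly from the definition of $\Psi$, and to deduce (2) from Theorem~\ref{3}(12) together with the corollary giving $U\subseteq \Psi(U)$ for every open $U$.

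For (1), I observe that, by definition, $x\in\Psi(A)$ holds exactly when there exists $U\in\tau$ with $x\in U$ and $(U-A)^{c}\notin\p$. This is verbatim the condition for $x$ to belong to $\bigcup\{U\in\tau : (U-A)^{c}\notin\p\}$, so both inclusions of (1) collapse into a single chain of equivalences. No additional machinery is required.

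For (2), I take an arbitrary $x$ in the right-hand union and pick $U\in\tau$ with $x\in U$ that realizes the hypothesis. Reading the condition via De Morgan as $[(U-A)\cup(A-U)]^{c}\notin\p$ (this is the substantive reading, since the literal union $(U-A)^{c}\cup(A-U)^{c}$ equals $X$ identically and would be vacuous by the first axiom of a primal), I apply Theorem~\ref{3}(12) to the pair $(A,U)$ to conclude $\Psi(A)=\Psi(U)$. Since $U\in\tau$, the corollary immediately preceding this theorem yields $U\subseteq \Psi(U)$, so that $x\in U\subseteq \Psi(U)=\Psi(A)$, which is the desired containment.

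The only real obstacle is interpretive, and it lies in (2): one must recognize that the hypothesis on $U$ is engineered precisely to match that of Theorem~\ref{3}(12) (once read through De~Morgan), after which both parts reduce to a routine quotation of earlier results. The reason (2) is only an inclusion rather than an equality is that Theorem~\ref{3}(12) is strictly stronger than part (5): if $(U-A)^{c}\notin\p$ alone suffices (part (1)), then the symmetric-difference hypothesis of (2) is merely a sufficient refinement, not an exhaustive description of $\Psi(A)$.
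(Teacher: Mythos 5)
Your proof is correct, and for part (2) it takes a genuinely different route from the paper. Part (1) coincides with the paper's one-line observation that the union is a literal restatement of the definition of $\Psi$. For part (2), the paper argues in one step: under the reading $[(U-A)\cup(A-U)]^{c}\notin\p$ --- which, as you rightly note, is the only reading under which the statement is non-vacuous, and is also the only one under which the paper's own appeal to ``heredity'' works, since $[(U-A)\cup(A-U)]^{c}=(U-A)^{c}\cap(A-U)^{c}\subseteq(U-A)^{c}$ and ``$\notin\p$'' is upward closed by Corollary~\ref{23} --- every $U$ in the second union already satisfies the defining condition of the first union, so the containment is immediate from (1). You instead route through Theorem~\ref{3}(12) to get $\Psi(A)=\Psi(U)$ and then through the corollary $U\subseteq\Psi(U)$ for open $U$. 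Both arguments are valid; the paper's is more elementary (a single application of heredity, no reliance on the machinery behind Theorem~\ref{3}(10)--(12)), while yours makes explicit that the hypothesis of (2) is exactly the $\Psi$-invariance condition. Your interpretive remark is a genuine contribution worth recording: taken literally, $(U-A)^{c}\cup(A-U)^{c}=X\notin\p$ always, so (2) would falsely assert $\bigcup\tau=X\subseteq\Psi(A)$ for every $A$.
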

 \begin{proof}
(1) This follows immediately from the definition of $\Psi$-operator.\\
(2) Since $\p$ is heredity, it is obvious that $\bigcup  \{U\in  \tau : (U-A)^{c}\cup (A-U)^{c}\notin \p\} \subseteq \bigcup  \{U\in  \tau : (U-A)^{c}\notin \p\}=\Psi(A)$, for every $A\subseteq X$.
\end{proof}
\vspace{2mm}
%%%%%%%%%%%%%%%%%%%%%%%%%%%%%%%%%%%%%%%%%%%%%%%%%%%%%%%%%%%%%
%%%%%%%%%%%%%%%%%%%%%%%%%%%%%%%%%%%%%%%%%%%%%%%%%%%%%%%%%%%%%
%%%%%%%%%%%%%%%%%%%%%%%%%%%%%%%%%%%%%%%%%%%%%%%%%%%%%%%%%%%%%%%
%%%%%%%%%%%%%%%%%%%%%%%%%%%%%%%%%%%%%%%%%%%%%%%%%%%%%%%%%%%%%%%%%
%%%%%%%%%%%%%%%%%%%%%%%%%%%%%%%%%%%%%%%%%%%%%%%%%%%%%%%
\begin{theorem}~\label{9}
Let $(X, \tau, \p)$ be a primal topological  space. If $\sigma=\{A\subseteq X : A\subseteq \Psi (A)\}$. Then,  $\sigma$ is a topology on $X$ and $\sigma=\tau^{\diamond}$.
\end{theorem}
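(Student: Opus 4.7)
The plan is to prove this in two stages: first that $\sigma$ is a topology, then that it coincides with $\tau^{\diamond}$. Both parts will feed almost entirely off the properties already collected in Theorem~\ref{3}, so the work is more bookkeeping than invention.

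For the topology axioms, I would check each clause in turn. Emptiness: $\emptyset \subseteq \Psi(\emptyset)$ is automatic, and $\Psi(X) = X - (X-X)^{\diamond} = X - \emptyset^{\diamond} = X$ by Theorem~\ref{1a}(2), so $X \in \sigma$. Finite intersections: if $A, B \in \sigma$, then using monotonicity of $\Psi$ (property (3)) and the multiplicativity property (4) gives
\[
A \cap B \subseteq \Psi(A) \cap \Psi(B) = \Psi(A \cap B),
\]
so $A \cap B \in \sigma$. Arbitrary unions: if $\{A_\alpha\}_{\alpha \in \Lambda} \subseteq \sigma$, then for each $\alpha$, monotonicity of $\Psi$ gives $A_\alpha \subseteq \Psi(A_\alpha) \subseteq \Psi\bigl(\bigcup_\beta A_\beta\bigr)$, and taking the union over $\alpha$ yields $\bigcup_\alpha A_\alpha \subseteq \Psi\bigl(\bigcup_\alpha A_\alpha\bigr)$.

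For the equality $\sigma = \tau^{\diamond}$, one inclusion is immediate: property (5) of Theorem~\ref{3} says every $U \in \tau^{\diamond}$ satisfies $U \subseteq \Psi(U)$, so $\tau^{\diamond} \subseteq \sigma$. The reverse inclusion is the main point, and it is where property (9) does the real work: if $A \in \sigma$, then $A \subseteq \Psi(A)$, so
\[
A = A \cap \Psi(A) = \operatorname{int}^{\diamond}(A),
\]
which forces $A \in \tau^{\diamond}$.

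I do not anticipate any serious obstacle here, because each topology axiom maps onto a single clause of Theorem~\ref{3}, and the nontrivial half of $\sigma = \tau^{\diamond}$ is a one-line consequence of property (9). If there is a subtle point at all, it is just being careful that monotonicity of $\Psi$ over an arbitrary (possibly infinite) index set is legitimate — but (3) is stated set-theoretically and applies to any inclusion, so the union step goes through without modification.
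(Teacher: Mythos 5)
Your proof is correct and follows essentially the same route as the paper: verify the topology axioms from Theorem~\ref{3}(3)--(4), get $\tau^{\diamond}\subseteq\sigma$ from property (5), and then establish the reverse inclusion. The only (harmless) divergence is in the last step, where you invoke property (9) to get $A=A\cap\Psi(A)=\operatorname{int}^{\diamond}(A)$, whereas the paper uses property (1) to conclude $(X-A)^{\diamond}\subseteq X-A$ and hence that $X-A$ is $\tau^{\diamond}$-closed; both are one-line finishes and equally valid.
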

 \begin{proof}

Let $\sigma=\{A\subseteq X : A\subseteq \Psi (A)\}$. First, we show that $\sigma$ is a topology. Observe that $\emptyset\subseteq \Psi(\emptyset)$ and $X\subseteq \Psi(X)=X$, and thus $\emptyset$ and $X \in \sigma$. Now, if $A, B\in \sigma$, then $A\cap B\subseteq \Psi(A)\cap \Psi(B)=\Psi(A\cap B).$ It implies that $A\cap B \in \sigma.$ If $\{A_{\alpha}: \alpha \in \Delta\}\subseteq \sigma$, then $A_{\alpha} \subseteq \Psi( A_{\alpha})\subseteq \Psi(\bigcup_{\alpha\in\Delta} A_{\alpha})$, for every $\alpha\in \Delta$, and hence  $\bigcup A_{\alpha}\subseteq \Psi(\bigcup_{\alpha\in\Delta} A_{\alpha})$. This shows that $\sigma$ is a topology on $X.$

Now, if $U\in \tau^{\diamond}$  and $x\in U$, then by Theorem ~\ref{base}, there exists $V\in \tau(x)$ and $I\notin \p$ such that $x\in V\cap I\subseteq U$. Clearly $I\subseteq (V-U)^{c}$, so that $(V-U)^{c}\notin \p$ by heredity, and hence $x\in \Psi(U)$. Thus, $U\subseteq \Psi(U)$ and $\tau^{\diamond}\subseteq\sigma$. Now, let $A\in \sigma$, then we have  $A\subseteq \Psi(A)$, i.e. $A\subseteq X-(X-A)^{\diamond}$ and $(X-A)^{\diamond}\subseteq X-A$. This shows that  $X-A$ is $\tau^{\diamond}$-closed and hence $A\in \tau^{\diamond}$. Thus, $\sigma\subseteq \tau^{\diamond}$, and hence $\sigma=\tau^{\diamond}$.
\end{proof}
%%%%%%%%%%%%%%%%%%%%%%%%%%%%%%%%%%%%%%%%%%%%%%%%%%%%%%%%%%%%%
%%%%%%%%%%%%%%%%%%%%%%%%%%%%%%%%%%%%%%%%%%%%%%%%%%%%%%%%%%%%%
%%%%%%%%%%%%%%%%%%%%%%%%%%%%%%%%%%%%%%%%%%%%%%%%%%%%%%%%%%%%%%%
%%%%%%%%%%%%%%%%%%%%%%%%%%%%%%%%%%%%%%%%%%%%%%%%%%%%%%%%%%%%%%%%%
%%%%%%%%%%%%%%%%%%%%%%%%%%%%%%%%%%%%%%%%%%%%%%%%%%%%%%%
\section{Topology suitable for a primal}

In this section, we introduce topology suitable for a primal on a primal topological space and investigate some properties.\\

\begin{definition}
Let $(X,\tau,\mathcal{P})$ be a primal topological space. Then, $\tau$  is said to be suitable for the primal $\mathcal{P}$ if $A^{c} \cup A^{\diamond}\notin \mathcal{P}$, for all $A \subseteq X.$
%%%%%%%%%%%%%%%%%%%%%%%%%%%%%%%%%%%%%%%%%%%%%%%%%%%%%%%%%%%%%

\end{definition}

We now give some equivalent descriptions of this definition.

%%%%%%%%%%%%%%%%%%%%%%%%%%%%%%%%%%%%%%%%%%%%%%%%%%%%%%%%%%%%%

\begin{theorem}~\label{4}
For a primal topological  space  $(X, \tau, \p)$, the following are equivalent:
\begin{enumerate}
  \item $\tau$ is suitable for the primal $\mathcal{P}$,
  \item for any $\tau^{\diamond}$-closed subset $A$ of $X$, $A^{c} \cup A^{\diamond}\notin \mathcal{P}$,
  \item whenever for any $A\subseteq X$ and each $x\in A$  there corresponds some $U_x \in \tau(x)$
with $U_{x}^{c}\cup A^{c} \notin \mathcal{P}$, it follows that $A^{c} \notin\mathcal{P}$,
\item for $A\subseteq X$ and $A\cap A^{\diamond}=\emptyset$, it follows that $A^{c} \notin\mathcal{P}$.
\end{enumerate}
\end{theorem}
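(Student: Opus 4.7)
The plan is to prove the cyclic chain $(1)\Rightarrow(2)\Rightarrow(3)\Rightarrow(4)\Rightarrow(1)$. The implication $(1)\Rightarrow(2)$ is immediate, since (2) is just the statement of (1) restricted to the subclass of $\tau^{\diamond}$-closed sets. The implication $(3)\Rightarrow(4)$ is nearly as direct: if $A\cap A^{\diamond}=\emptyset$, then for every $x\in A$ we have $x\notin A^{\diamond}$, so unfolding the definition of $A^{\diamond}$ produces an open neighborhood $U_x\in\tau(x)$ with $U_x^{c}\cup A^{c}\notin\mathcal{P}$, which is precisely the hypothesis of (3); hence (3) delivers $A^{c}\notin\mathcal{P}$.

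For $(4)\Rightarrow(1)$, given an arbitrary $A\subseteq X$, I would apply (4) to $B=A-A^{\diamond}$. Monotonicity from Theorem \ref{1a}(5) gives $B^{\diamond}\subseteq A^{\diamond}$, so $B\cap B^{\diamond}\subseteq (A-A^{\diamond})\cap A^{\diamond}=\emptyset$, and (4) yields $B^{c}\notin\mathcal{P}$. Since $B^{c}=(A\cap (A^{\diamond})^{c})^{c}=A^{c}\cup A^{\diamond}$, this is exactly statement (1).

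The crux is $(2)\Rightarrow(3)$, where the idea is to manufacture the right $\tau^{\diamond}$-closed set to feed into (2). The hypothesis of (3) translates into $A\cap A^{\diamond}=\emptyset$, equivalently $A^{\diamond}\subseteq A^{c}$. I would then set $B=A\cup A^{\diamond}$ and use parts (6) and (4) of Theorem \ref{1a} to compute
\[
B^{\diamond}=A^{\diamond}\cup (A^{\diamond})^{\diamond}=A^{\diamond}\subseteq B,
\]
so $B$ is $\tau^{\diamond}$-closed. Applying (2) to $B$ and simplifying
\[
B^{c}\cup B^{\diamond}=\bigl(A^{c}\cap (A^{\diamond})^{c}\bigr)\cup A^{\diamond}=(A^{c}\cup A^{\diamond})\cap X = A^{c}\cup A^{\diamond},
\]
combined with $A^{\diamond}\subseteq A^{c}$, collapses this to $A^{c}\notin\mathcal{P}$, as required.

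The main obstacle is recognizing, in the step $(2)\Rightarrow(3)$, that the idempotency $(A^{\diamond})^{\diamond}\subseteq A^{\diamond}$ from Theorem \ref{1a}(4) is exactly what makes $A\cup A^{\diamond}$ the correct $\tau^{\diamond}$-closed hull on which to invoke (2); the rest is translation between the pointwise and global formulations and routine set-algebra manipulations.
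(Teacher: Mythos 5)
Your proof is correct and follows essentially the same route as the paper's: the same cyclic chain $(1)\Rightarrow(2)\Rightarrow(3)\Rightarrow(4)\Rightarrow(1)$, with $A\cup A^{\diamond}$ as the $\tau^{\diamond}$-closed set fed into (2) and with (4) applied to $A\setminus A^{\diamond}$. The only (minor, and welcome) difference is in $(4)\Rightarrow(1)$, where you verify $(A\setminus A^{\diamond})\cap(A\setminus A^{\diamond})^{\diamond}=\emptyset$ in one line via monotonicity of $\diamond$, whereas the paper does it by a pointwise unfolding of the definition together with heredity of the primal.
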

 \begin{proof}
 (1) $\Rightarrow$ (2): It is trivial.

  (2) $\Rightarrow$ (3):  Let $A\subseteq X$ and assume that for every $x\in A$ there exists $U \in \tau(x)$
such that $U^{c} \cup A^{c} \notin \mathcal{P}$. Then, $x\notin A^{\diamond}$ so that $A\cap A^{\diamond} = \emptyset$. Since $A\cup A^{\diamond}$ is
$\tau^{\diamond}$-closed, thus by (2) we have $(A\cup A^{\diamond})^{c} \cup  (A\cup A^{\diamond})^\diamond \notin \mathcal{P}$. i.e. $(A\cup A^{\diamond})^{c} \cup (A^{\diamond}\cup (A^{\diamond})^\diamond) \notin \mathcal{P}$ by Theorem ~\ref{1a} (6)  i.e. $(A\cup A^{\diamond})^{c}\cup A^{\diamond} \notin\mathcal{P}$ by Theorem ~\ref{1a} (4)
i.e., $A^{c} \notin   \mathcal{P}$ (as $A \cap A^{\diamond} = \emptyset$).

(3) $\Rightarrow$ (4):  If $A\subseteq X$ and $A \cap A^{\diamond} = \emptyset$, then $A \subseteq X \setminus A^{\diamond}$. Let $x\in A$.
Then $x\notin A^{\diamond}$. So there exists $U\in \tau(x)$ such that $U^{c} \cup A^{c}\notin \mathcal{P}$. Then by (3), $A^{c} \notin\mathcal{P}$.

    (4) $\Rightarrow$ (1):  Let $A\subseteq X$. We  first claim  that $(A\setminus A^{\diamond})\cap (A\setminus A^{\diamond})^{\diamond}=\emptyset$.
    In fact, if $x\in (A\setminus A^{\diamond})\cap (A\setminus A^{\diamond})^{\diamond}$, then $x\in A\setminus A^{\diamond}.$ Thus $x\in A$ and $x\notin A^{\diamond}.$ Then, there exists $U\in \tau(x)$ such that $U^{c}\cup A^{c}\notin \mathcal{P}$. Now, $U^{c}\cup A^{c}\subseteq U^{c}\cup (A\setminus A^{\diamond})^{c}$ by Corollary ~\ref{23} (2),  $U^{c}\cup (A\setminus A^{\diamond})^{c}\notin \mathcal{P}.$ Hence,  $x\notin (A\setminus A^{\diamond})^{\diamond}$, which is a contradiction. Hence, by (4), $(A \setminus A^{\diamond})^{c}= A^{c} \cup A^{\diamond}\notin \mathcal{P}$ and $\tau$ is suitable for the primal $\mathcal{P}$.
 \end{proof}

%%%%%%%%%%%%%%%%%%%%%%%%%%%%%%%%%%%%%%%%%%%%%%%%%%%%%%%%%%%%%
%%%%%%%%%%%%%%%%%%%%%%%%%%%%%%%%%%%%%%%%%%%%%%%%%%%%%%%%%%%%%
%%%%%%%%%%%%%%%%%%%%%%%%%%%%%%%%%%%%%%%%%%%%%%%%%%%%%%%%%%%%%%%
%%%%%%%%%%%%%%%%%%%%%%%%%%%%%%%%%%%%%%%%%%%%%%%%%%%%%%%%%%%%%%%%%
%%%%%%%%%%%%%%%%%%%%%%%%%%%%%%%%%%%%%%%%%%%%%%%%%%%%%%%
%%%%%%%%%%%%%%%%%%%%%%%%%%%%%%%%%%%%%%%%%%%%%%%%%%%%%%%%%%%%%

\begin{theorem}~\label{1}
For a primal topological  space  $(X, \tau, \p)$, the following conditions are equivalent and any of these three  conditions is   necessary for $\tau$ 
to be suitable for the primal $\mathcal{P}$.
\begin{enumerate}
  \item for any $A\subseteq X$, $A\cap A^{\diamond}=\emptyset$, then $A^{\diamond}=\emptyset$,
   \item for any $A\subseteq X$, $(A\setminus A^{\diamond})^{\diamond}=\emptyset$,
    \item for any $A\subseteq X$, $(A\cap A^{\diamond})^{\diamond}=A^{\diamond}$.
\end{enumerate}
\end{theorem}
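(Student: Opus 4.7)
The plan is to establish the cycle $(1) \Rightarrow (2) \Rightarrow (3) \Rightarrow (1)$ for the equivalence, and then derive $(1)$ directly from suitability to handle the necessity claim. All the ingredients are already available: monotonicity, additivity, and $\emptyset^{\diamond} = \emptyset$ from Theorem~\ref{1a}, plus Lemma~\ref{11} and Theorem~\ref{4} for the necessity step.

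For $(1) \Rightarrow (2)$, I would set $B := A \setminus A^{\diamond}$. Monotonicity (Theorem~\ref{1a}(5)) gives $B^{\diamond} \subseteq A^{\diamond}$, while $B \cap A^{\diamond} = \emptyset$ by construction, so $B \cap B^{\diamond} \subseteq B \cap A^{\diamond} = \emptyset$; an application of $(1)$ to $B$ then yields $B^{\diamond} = \emptyset$, which is exactly $(2)$. For $(2) \Rightarrow (3)$, I would decompose $A = (A \setminus A^{\diamond}) \cup (A \cap A^{\diamond})$ and apply the additivity of $(\cdot)^{\diamond}$ (Theorem~\ref{1a}(6)) to get $A^{\diamond} = (A \setminus A^{\diamond})^{\diamond} \cup (A \cap A^{\diamond})^{\diamond}$; condition $(2)$ kills the first summand, giving $A^{\diamond} = (A \cap A^{\diamond})^{\diamond}$. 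For $(3) \Rightarrow (1)$, if $A \cap A^{\diamond} = \emptyset$, then $(3)$ combined with $\emptyset^{\diamond} = \emptyset$ (Theorem~\ref{1a}(2)) gives $A^{\diamond} = (A \cap A^{\diamond})^{\diamond} = \emptyset^{\diamond} = \emptyset$.

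For the necessity assertion, assume $\tau$ is suitable for $\mathcal{P}$; it suffices to verify $(1)$. Let $A \subseteq X$ with $A \cap A^{\diamond} = \emptyset$. By Theorem~\ref{4}((1)$\Rightarrow$(4)), suitability gives $A^{c} \notin \mathcal{P}$. Lemma~\ref{11} then directly yields $A^{\diamond} = \emptyset$, establishing $(1)$.

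There is no real obstacle here: the equivalence is an essentially formal manipulation once we have the additivity $(A \cup B)^{\diamond} = A^{\diamond} \cup B^{\diamond}$ in hand to split $A$ along $A^{\diamond}$, and the necessity is an immediate two-line consequence of Theorem~\ref{4} followed by Lemma~\ref{11}. The only point to be careful about is using the equality $(A \cup B)^{\diamond} = A^{\diamond} \cup B^{\diamond}$ (and not merely the inclusion from monotonicity) in the step $(2) \Rightarrow (3)$.
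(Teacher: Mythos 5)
Your proposal is correct and follows essentially the same route as the paper: the cycle $(1)\Rightarrow(2)\Rightarrow(3)\Rightarrow(1)$, with the same decomposition $A=(A\setminus A^{\diamond})\cup(A\cap A^{\diamond})$ and the same appeal to additivity and $\emptyset^{\diamond}=\emptyset$. Two minor points in your favour: you derive the disjointness $(A\setminus A^{\diamond})\cap(A\setminus A^{\diamond})^{\diamond}=\emptyset$ in one line from monotonicity, whereas the paper reuses the element-chase from the proof of Theorem~\ref{4}, and you explicitly supply the necessity argument (suitability $\Rightarrow$ Theorem~\ref{4}(4) $\Rightarrow$ Lemma~\ref{11}), which the paper's proof leaves implicit.
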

 \begin{proof}
(1) $\Rightarrow$ (2): It follows by  noting  that $(A\setminus A^{\diamond})\cap (A\setminus A^{\diamond})^{\diamond}=\emptyset$, for all $A\subseteq X$.
  
(2) $\Rightarrow$ (3): Since $A=(A\setminus (A\cap A^{\diamond}))\cup (A\cap A^{\diamond}),$ we have 
$A^{\diamond}=(A\setminus (A\cap A^{\diamond}))^{\diamond}\cup (A\cap A^{\diamond})^{\diamond}=(A\setminus  A^{\diamond})^{\diamond}\cup (A\cap A^{\diamond})^{\diamond}=(A\cap A^{\diamond})^{\diamond}$ by (2).
    
(3) $\Rightarrow$ (1): Let $A\subseteq X$ and  $A\cap A^{\diamond}=\emptyset$. Then by (3), $A^{\diamond}= (A\cap A^{\diamond})^{\diamond}=\emptyset^{\diamond}=\emptyset.$
  \end{proof}

  %%%%%%%%%%%%%%%%%%%%%%%%%%%%%%%%%%%%%%%%%%%%%%%%%%%%%%%%%%%%%
%%%%%%%%%%%%%%%%%%%%%%%%%%%%%%%%%%%%%%%%%%%%%%%%%%%%%%%%%%%%%
\begin{corollary}~\label{15}
If  $(X,\tau,\mathcal{P})$ be a primal topological space such that $\tau$ is suitable for
$\mathcal{P}$.,
then the operator $\diamond$ is an idempotent operator i.e., $A^{\diamond}=(A^{\diamond})^{\diamond}$ for any $A\subseteq X$.
\end{corollary}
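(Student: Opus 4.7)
The plan is to combine two results already in hand: Theorem \ref{1a}(4), which gives the inclusion $(A^{\diamond})^{\diamond}\subseteq A^{\diamond}$ unconditionally, and Theorem \ref{1}, which says that when $\tau$ is suitable for $\mathcal{P}$ the identity $(A\cap A^{\diamond})^{\diamond}=A^{\diamond}$ holds for every $A\subseteq X$. The task therefore reduces to producing the reverse inclusion $A^{\diamond}\subseteq (A^{\diamond})^{\diamond}$.

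First I would fix an arbitrary $A\subseteq X$ and invoke Theorem \ref{1}(3) (which, by the statement of the theorem, is a necessary consequence of suitability) to write $A^{\diamond}=(A\cap A^{\diamond})^{\diamond}$. Next, I would use the trivial inclusion $A\cap A^{\diamond}\subseteq A^{\diamond}$ and apply monotonicity of the $\diamond$-operator, namely Theorem \ref{1a}(5), to conclude $(A\cap A^{\diamond})^{\diamond}\subseteq (A^{\diamond})^{\diamond}$. Chaining the two relations yields $A^{\diamond}\subseteq (A^{\diamond})^{\diamond}$.

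Finally, combining this with Theorem \ref{1a}(4) gives $A^{\diamond}=(A^{\diamond})^{\diamond}$, which is precisely the idempotence claim. Since $A$ was arbitrary, the proof is complete.

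There is really no main obstacle here: the corollary is a one-line consequence of Theorem \ref{1} once one notices that suitability supplies the nontrivial direction and Theorem \ref{1a}(4) supplies the other. The only point that requires a moment of care is verifying that the phrasing of Theorem \ref{1}, which asserts that (1)--(3) are equivalent and \emph{necessary} for suitability, does indeed entitle us to use clause (3) under the hypothesis that $\tau$ is suitable for $\mathcal{P}$.
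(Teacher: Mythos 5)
Your proposal is correct and is essentially identical to the paper's own proof: both combine Theorem~\ref{1a}(4) for $(A^{\diamond})^{\diamond}\subseteq A^{\diamond}$ with Theorem~\ref{1}(3) (valid under suitability, since the theorem asserts those conditions are necessary for suitability) and monotonicity from Theorem~\ref{1a}(5) to get $A^{\diamond}=(A\cap A^{\diamond})^{\diamond}\subseteq (A^{\diamond})^{\diamond}$. Your reading of Theorem~\ref{1}'s ``necessary for suitability'' phrasing is the intended one.
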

 \begin{proof}
By  Theorem ~\ref{1a} (4), we have $(A^{\diamond})^{\diamond}\subseteq A^{\diamond}$. By Theorem ~\ref{1} and Theorem ~\ref{1a} (5),
we get $A^{\diamond}=(A\cap A^{\diamond})^{\diamond}\subseteq (A^{\diamond})^{\diamond}.$
\end{proof}
 
%%%%%%%%%%%%%%%%%%%%%%%%%%%%%%%%%%%%%%%%%%%%%%%%%%%%%%%%%%%%%%%
%%%%%%%%%%%%%%%%%%%%%%%%%%%%%%%%%%%%%%%%%%%%%%%%%%%%%%%%%%%%%%%%%
%%%%%%%%%%%%%%%%%%%%%%%%%%%%%%%%%%%%%%%%%%%%%%%%%%%%%%%

\begin{theorem}~\label{2}
Let $(X,\tau,\mathcal{P})$ be a primal topological space such that $\tau$ is suitable for
$\mathcal{P}$. Then, a subset $A$  of $X$ is $\tau^{\diamond}$-closed if and only if  it can be expressed as a union of a  set which is
closed in $(X, \tau)$ and a complement  not in $\mathcal{P}$.
\end{theorem}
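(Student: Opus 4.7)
The statement is an iff, so I would tackle the two directions separately, and I expect the ``closed set plus complement-not-in-$\mathcal{P}$'' representation to split cleanly along the decomposition $A = A^{\diamond} \cup (A \setminus A^{\diamond})$.

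For the easy direction (sufficiency), suppose $A = F \cup N$ where $F$ is $\tau$-closed and $N^{c} \notin \mathcal{P}$. I would show $A$ is $\tau^{\diamond}$-closed by checking $A^{\diamond} \subseteq A$, which is the definition that drops out of $cl^{\diamond}(A) = A \cup A^{\diamond} = A$ given the description of $\tau^{\diamond}$. Additivity of $(\cdot)^{\diamond}$ from Theorem~\ref{1a}(6) gives $A^{\diamond} = F^{\diamond} \cup N^{\diamond}$. The condition $N^{c} \notin \mathcal{P}$ triggers Lemma~\ref{11}, yielding $N^{\diamond} = \emptyset$, while $F$ being $\tau$-closed triggers Theorem~\ref{1a}(1), giving $F^{\diamond} \subseteq F \subseteq A$. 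So $A^{\diamond} \subseteq A$, and this direction does not even need suitability.

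For the necessity direction, suppose $A$ is $\tau^{\diamond}$-closed, i.e.\ $A^{\diamond} \subseteq A$. The decomposition I would propose is
\[
A = A^{\diamond} \cup (A \setminus A^{\diamond}),
\]
which is a valid set equality precisely because $A^{\diamond} \subseteq A$. The first piece $A^{\diamond}$ is $\tau$-closed by Theorem~\ref{1a}(3). For the second piece $N := A \setminus A^{\diamond}$, compute $N^{c} = A^{c} \cup A^{\diamond}$. Here is where suitability of $\tau$ for $\mathcal{P}$ is invoked directly from its definition: $A^{c} \cup A^{\diamond} \notin \mathcal{P}$ for every $A \subseteq X$, so $N^{c} \notin \mathcal{P}$, as required.

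There is no real obstacle here; the only subtlety is simply noticing that the defining inequality for suitability is exactly the statement $(A \setminus A^{\diamond})^{c} \notin \mathcal{P}$, so the proof essentially writes itself once the canonical decomposition $A = A^{\diamond} \cup (A \setminus A^{\diamond})$ is chosen. I would present the two implications in short succession, citing Theorem~\ref{1a}(1), (3), (6) and Lemma~\ref{11} where appropriate.
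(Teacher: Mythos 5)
Your proof is correct and follows essentially the same route as the paper: the same decomposition $A = A^{\diamond} \cup (A \setminus A^{\diamond})$ for necessity, and the same use of $N^{\diamond} = \emptyset$ (Lemma~\ref{11}) together with $F^{\diamond} \subseteq F$ for sufficiency. Your two minor refinements — invoking the definition of suitability directly rather than via Theorem~\ref{4}, and observing that the sufficiency direction does not need suitability at all — are both accurate.
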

\begin{proof}Let $A$ be a $\tau^{\diamond}$-closed subset of $X$. Then, $A^{\diamond}\subseteq A$. Now, $A=A^{\diamond}\cup (A\setminus A^{\diamond})$.
Since $\tau$ is suitable for $\mathcal{P}$, then by Theorem ~\ref{4} $(A\setminus A^{\diamond})^{c}\notin \mathcal{P}$ and by Theorem ~\ref{1a} (3),
$A^{\diamond}$ is closed.

Conversely, let $A=F\cup B$, where $F$ is closed and $B^{c}\notin \mathcal{P}$. Then, $A^{\diamond}=(F\cup B)^{\diamond}=F^{\diamond}$  by Corollary ~\ref{5}, and hence by Theorem ~\ref{1a}(3)  $A^{\diamond}=(F\cup B)^{\diamond}=F^{\diamond}=cl(F)=F\subseteq A$. Hence, $A$ is $\tau^{\diamond}$-closed.
\end{proof}

 %%%%%%%%%%%%%%%%%%%%%%%%%%%%%%%%%%%%%%%%%%%%%%%%%%%%%%%%%%%%%
%%%%%%%%%%%%%%%%%%%%%%%%%%%%%%%%%%%%%%%%%%%%%%%%%%%%%%%%%%%%%
\begin{corollary}~\label{6}
Let the  topology $\tau$ on  a space  $X$ be suitable for a primal $\mathcal{P}$ on $X$.
Then, $\mathcal{B}_{\mathcal{P}}=\{U\cap P: (U\in \tau)(P\notin \mathcal{P})\}$ is a topology on $X$ and hence, $\mathcal{B}_{\mathcal{P}}=\tau^{\diamond}$
\end{corollary}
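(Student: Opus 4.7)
The plan is to derive Corollary~\ref{6} as a straightforward De~Morgan dualization of Theorem~\ref{2}. From Theorem~\ref{base} we already know $\mathcal{B}_{\mathcal{P}}$ is a base for $\tau^{\diamond}$; what is missing is that under the suitability hypothesis every $\tau^{\diamond}$-open set is \emph{itself} of the form $U\cap P$ with $U\in\tau$ and $P\notin\mathcal{P}$ (not merely a union of such sets). Once that equality $\mathcal{B}_{\mathcal{P}}=\tau^{\diamond}$ is in hand, the fact that $\mathcal{B}_{\mathcal{P}}$ is a topology is automatic, since $\tau^{\diamond}$ is already known to be one.

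Concretely, I would proceed in two directions. First, the inclusion $\mathcal{B}_{\mathcal{P}}\subseteq\tau^{\diamond}$ is immediate from Theorem~\ref{base}, as every basic set is $\tau^{\diamond}$-open. For the reverse inclusion, I would pick an arbitrary $A\in\tau^{\diamond}$ and look at its complement $X\setminus A$, which is $\tau^{\diamond}$-closed. By Theorem~\ref{2} (whose hypothesis is exactly the suitability of $\tau$ for $\mathcal{P}$), I can write $X\setminus A=F\cup B$ with $F\in C(X)$ and $B^{c}\notin\mathcal{P}$. Complementing and applying De~Morgan yields $A=(X\setminus F)\cap(X\setminus B)$, where $X\setminus F\in\tau$ and $X\setminus B\notin\mathcal{P}$. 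Thus $A\in\mathcal{B}_{\mathcal{P}}$, giving $\tau^{\diamond}\subseteq\mathcal{B}_{\mathcal{P}}$.

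Combining the two inclusions gives $\mathcal{B}_{\mathcal{P}}=\tau^{\diamond}$; since $\tau^{\diamond}$ is a topology on $X$ by the definition preceding Theorem~\ref{12}, this automatically shows that $\mathcal{B}_{\mathcal{P}}$ is a topology as well, completing the proof.

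I do not expect any real obstacle here: the entire content is a reformulation of the characterization of $\tau^{\diamond}$-closed sets given in Theorem~\ref{2}. The only point where one should be careful is to note that suitability is used precisely to invoke Theorem~\ref{2}; without suitability we would be stuck with $\mathcal{B}_{\mathcal{P}}$ being merely a base, not the whole topology, so the corollary would fail in general.
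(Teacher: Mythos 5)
Your proposal is correct and matches the paper's own argument essentially verbatim: both take an arbitrary $\tau^{\diamond}$-open set, apply Theorem~\ref{2} to its $\tau^{\diamond}$-closed complement to write it as $F\cup B$ with $F$ closed and $B^{c}\notin\mathcal{P}$, and then use De~Morgan to exhibit the original set as $(X\setminus F)\cap(X\setminus B)\in\mathcal{B}_{\mathcal{P}}$, with Theorem~\ref{base} supplying the reverse inclusion. No gaps.
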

 \begin{proof}
Let $U\in \tau^{\diamond}$. Then by Theorem ~\ref{2}, $X\setminus U=F\cup B$, where $F$ is closed and $B^{c}\notin \mathcal{P}$. 
Then, $U=X\setminus(F\cup B)=(X\setminus F)\cap (X\setminus B)=V\cap P$, where $V= F^{c}\in \tau$ and $P=B^{c}\notin \mathcal{P}$.
Thus, every $\tau^{\diamond}$-open set is of the form $V\cap P$, where $V\in \tau$ and $P\notin \mathcal{P}$. The rest follows from Theorem 
 ~\ref{base}.
\end{proof}

%%%%%%%%%%%%%%%%%%%%%%%%%%%%%%%%%%%%%%%%%%%%%%%%%%%%%%%%%%%%%%%
%%%%%%%%%%%%%%%%%%%%%%%%%%%%%%%%%%%%%%%%%%%%%%%%%%%%%%%%%%%%%%%%%
%%%%%%%%%%%%%%%%%%%%%%%%%%%%%%%%%%%%%%%%%%%%%%%%%%%%%%%

%%%%%%%%%%%%%%%%%%%%%%%%%%%%%%%%%%%%%%%%%%%%%%%%%%%%%%%%%%%%%%%%%
%%%%%%%%%%%%%%%%%%%%%%%%%%%%%%%%%%%%%%%%%%%%%%%%%%%%%%%%%%%%%%%%%

\begin{theorem}~\label{8}
Let $(X, \tau, \mathcal{P})$ be a primal topological space  and $A$  be any subset
of $X$ such that $A\subseteq A^{\diamond}$. Then, 
$cl(A)=cl^{\diamond}(A)=cl(A^{\diamond})=A^{\diamond}$.
\end{theorem}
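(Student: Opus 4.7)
The plan is to show that all four expressions collapse to $A^{\diamond}$. Two of the four equalities are essentially free: by Theorem \ref{1a}(3), $cl(A^{\diamond})=A^{\diamond}$ regardless of any hypothesis on $A$, and since $cl^{\diamond}(A)=A\cup A^{\diamond}$ by definition, the hypothesis $A\subseteq A^{\diamond}$ gives $cl^{\diamond}(A)=A^{\diamond}$ immediately. So the only nontrivial equality to establish is $cl(A)=A^{\diamond}$.

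For the inclusion $cl(A)\subseteq A^{\diamond}$, I would combine the hypothesis $A\subseteq A^{\diamond}$ with Theorem \ref{1a}(3): $A^{\diamond}$ is closed, so $cl(A)\subseteq cl(A^{\diamond})=A^{\diamond}$.

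The reverse inclusion $A^{\diamond}\subseteq cl(A)$ is the one substantive step, and notably it does not require the hypothesis at all; it relies only on the axiom $X\notin\mathcal{P}$. I would argue by contrapositive: if $x\notin cl(A)$, then there exists $U\in\tau(x)$ with $U\cap A=\emptyset$, i.e.\ $A\subseteq U^{c}$. Then $U^{c}\cup A^{c}\supseteq A\cup A^{c}=X$, so $U^{c}\cup A^{c}=X\notin\mathcal{P}$, which witnesses $x\notin A^{\diamond}$. Putting the two inclusions together yields $cl(A)=A^{\diamond}$, completing the chain $cl(A)=cl^{\diamond}(A)=cl(A^{\diamond})=A^{\diamond}$.

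The only mild conceptual point — the ``main obstacle,'' such as it is — is noticing that $A^{\diamond}\subseteq cl(A)$ holds unconditionally because the witnessing set $U^{c}\cup A^{c}$ becomes the whole of $X$ as soon as $U$ misses $A$. Once this observation is in hand, the proof is just bookkeeping with Theorem \ref{1a}(3) and the definition of $cl^{\diamond}$.
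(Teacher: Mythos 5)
Your proof is correct, and the substantive step --- the unconditional inclusion $A^{\diamond}\subseteq cl(A)$ via the observation that $U\cap A=\emptyset$ forces $U^{c}\cup A^{c}=X\notin\mathcal{P}$ --- is exactly the argument the paper uses for that inclusion. Where you genuinely diverge is in handling $cl^{\diamond}(A)$ and $cl(A^{\diamond})$. The paper proves $cl(A)=cl^{\diamond}(A)$ the hard way, treating $cl^{\diamond}$ as closure in $\tau^{\diamond}$ and running an argument through basic $\tau^{\diamond}$-open sets $V\cap B$, Corollary~\ref{5}, and Theorem~\ref{12}; it then derives $cl(A)=cl(A^{\diamond})=A^{\diamond}$ by a chain of monotonicity steps. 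You instead invoke the paper's actual definition $cl^{\diamond}(A)=A\cup A^{\diamond}$, which under the hypothesis $A\subseteq A^{\diamond}$ collapses to $A^{\diamond}$ in one line, and you get $cl(A)\subseteq A^{\diamond}$ directly from the closedness of $A^{\diamond}$ (Theorem~\ref{1a}(3)). This is shorter and cleaner, and it also sidesteps a sloppy spot in the paper's version (the condition on $B$ in its basic-open-set argument is stated as $B\in\mathcal{P}$ where it should be $B\notin\mathcal{P}$). The only caveat: your shortcut leans on $cl^{\diamond}$ being \emph{defined} as the operator $A\cup A^{\diamond}$ rather than as topological closure in $\tau^{\diamond}$; since that is precisely how the paper defines it, your usage is faithful, though it is worth being aware that identifying the two requires the standard idempotence argument if one ever needs $cl^{\diamond}$ to mean the $\tau^{\diamond}$-closure.
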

 \begin{proof}
Since $\tau^{\diamond}$ is finer than $\tau$, then $cl^{\diamond}(A)\subseteq cl(A)$ for any subset $A$ of $X$. Now $x\notin cl^{\diamond}(A)$, there exist $V\in \tau$ and $B\in \mathcal{P}$ such that $x\in V\cap B$ and $(V\cap B)\cap A=\emptyset$, then $[(V\cap B)\cap A]^{\diamond}=\emptyset$. Thus $[(V\cap A)\setminus B^{c}]^{\diamond}=\emptyset$, hence by Corollary ~\ref{5} we have $(V\cap A)^{\diamond}=\emptyset$. By Theorem ~\ref{12}, we get $V\cap (A)^{\diamond}=\emptyset$ and  $V\cap A=\emptyset$ (as $A\subseteq A^{\diamond}$), then $x\notin cl(A)$. Thus, $cl(A)=cl^{\diamond}(A)$.
Now, by Theorem ~\ref{1a} (3), $A^{\diamond}=cl(A^{\diamond})$. Now, let $x\notin cl(A).$ Then, there exists $U\in \tau(x)$ such that $U\cap A=\emptyset.$ Thus, $(U\cap A)^{c}=U^{c}\cup A^{c}=X\notin \mathcal{P}$. So, $x\notin A^{\diamond}$ and hence $A^\diamond\subseteq cl(A)$. Again as $A^\diamond\subseteq cl(A)$, so
we have $cl(A^\diamond)\subseteq cl(cl(A))=cl(A)$. Also, $A\subseteq A^{\diamond}$, then $cl(A)\subseteq cl( A^{\diamond})$. Thus, $cl(A)=cl(A^{\diamond})=A^{\diamond}$.
\end{proof}
%%%%%%%%%%%%%%%%%%%%%%%%%%%%%%%%%%%%%%%%%%%%%%%%%%%%%%%%%%%%%%%%%%
%%%%%%%%%%%%%%%%%%%%%%%%%%%%%%%%%%%%%%%%%%%%%%%%%%%%%%%%%%%%%%%%%%%
%%%%%%%%%%%%%%%%%%%%%%%%%%%%%%%%%%%%%%%%%%%%%%%%%%%%%%%%%%%%%%%%%
%%%%%%%%%%%%%%%%%%%%%%%%%%%%%%%%%%%%%%%%%%%%%%%%%%%%%%%%%%%%%%%%%

\begin{theorem}
Let $(X, \tau, \mathcal{P})$ be a primal topological space such that $\tau$ is suitable for
$\mathcal{P}$ with $C(X)-\{X\}\subseteq \mathcal{P}$. Let $G$ be $\tau^{\diamond}$-open set such that $G=U\cap A$, where $U\in \tau$
and $A\notin \mathcal{P}.$ Then, 
$cl(G)=cl^{\diamond}(G)=G^{\diamond}=U^{\diamond}=cl(U)=cl^{\diamond}(U)$.
\end{theorem}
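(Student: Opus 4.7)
The six quantities split naturally into two groups: everything involving $G$ (namely $cl(G)$, $cl^{\diamond}(G)$, $G^{\diamond}$) and everything involving $U$ (namely $U^{\diamond}$, $cl(U)$, $cl^{\diamond}(U)$). My plan is to bridge the two groups by proving the single identity $G^{\diamond}=U^{\diamond}$, and then collapse each group individually by invoking Theorem~\ref{8}, whose hypothesis $S\subseteq S^{\diamond}$ I will secure for both $S=G$ and $S=U$.

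The heart of the argument is the identity $G^{\diamond}=U^{\diamond}$. Since $G=U\cap A$, I can write $U=G\cup(U\setminus A)$. The complement $(U\setminus A)^{c}=U^{c}\cup A$ contains $A$, and since $A\notin\mathcal{P}$, heredity (Corollary~\ref{23}(2)) forces $U^{c}\cup A\notin\mathcal{P}$. Now Corollary~\ref{5} applies directly with $B=U\setminus A$ and yields $U^{\diamond}=\bigl(G\cup(U\setminus A)\bigr)^{\diamond}=G^{\diamond}$. This is the step I expect to do most of the work, because it is where the hypothesis $A\notin\mathcal{P}$ is converted into a statement about $\diamond$.

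Next I need $G\subseteq G^{\diamond}$ and $U\subseteq U^{\diamond}$ in order to invoke Theorem~\ref{8}. The latter is immediate from Theorem~\ref{7}, whose hypothesis $C(X)-\{X\}\subseteq\mathcal{P}$ is part of the assumptions here: it says exactly that $U\subseteq U^{\diamond}$ for every $U\in\tau$. For $G$ I just chain: $G\subseteq U\subseteq U^{\diamond}=G^{\diamond}$. With both inclusions in hand, Theorem~\ref{8} applied to $U$ gives $cl(U)=cl^{\diamond}(U)=U^{\diamond}$, and applied to $G$ gives $cl(G)=cl^{\diamond}(G)=G^{\diamond}$. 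Combining with $G^{\diamond}=U^{\diamond}$ yields all six equalities simultaneously.

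The main obstacle is really just the first step, $G^{\diamond}=U^{\diamond}$, because $G$ is $\tau^{\diamond}$-open but need not be $\tau$-open, so the clean open-set tools (Theorems~\ref{12},~\ref{14}) do not apply to $G$ directly. The workaround is to approach $U$ from $G$ rather than $G$ from $U$, reading the difference $U\setminus A$ as a set whose complement lies outside $\mathcal{P}$, so that Corollary~\ref{5} can absorb it. Notice that the suitability hypothesis does not appear to be needed for this argument; it is listed in the statement but the proof above seems to go through using only $C(X)-\{X\}\subseteq\mathcal{P}$ together with $A\notin\mathcal{P}$.
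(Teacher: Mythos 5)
Your proof is correct, and it follows the same overall skeleton as the paper (establish $S\subseteq S^{\diamond}$ for $S=U$ and $S=G$, prove $G^{\diamond}=U^{\diamond}$, then invoke Theorem~\ref{8} twice), but it differs in two places in ways worth noting. For $G^{\diamond}=U^{\diamond}$ the paper works with $G=U-A^{c}$ and combines Lemma~\ref{10} with Lemma~\ref{11} to kill $(A^{c})^{\diamond}$, whereas you decompose $U=G\cup(U\setminus A)$ and feed $(U\setminus A)^{c}=U^{c}\cup A\supseteq A\notin\mathcal{P}$ into Corollary~\ref{5}; these are essentially the same computation packaged differently. The genuine divergence is the inclusion $G\subseteq G^{\diamond}$: the paper derives it from the $\tau^{\diamond}$-closedness of $X\setminus G$ via a somewhat delicate application of Lemma~\ref{10} and $X^{\diamond}=X$, while you get it for free by chaining $G\subseteq U\subseteq U^{\diamond}=G^{\diamond}$ once the identity $G^{\diamond}=U^{\diamond}$ is in hand. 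Your ordering (identity first, inclusion second) makes the argument shorter and avoids the paper's least transparent step, and it never uses the $\tau^{\diamond}$-openness of $G$ beyond the given representation $G=U\cap A$. Your closing observation is also accurate: every tool you invoke (Theorem~\ref{7}, Theorem~\ref{8}, Corollaries~\ref{23} and~\ref{5}) is stated without the suitability hypothesis, so suitability is indeed not needed for this theorem; the paper's proof likewise only uses suitability in a parenthetical appeal to Corollary~\ref{6} that is redundant given that $G=U\cap A$ is already assumed.
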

 \begin{proof}
 Let $G=U\cap A$, where $U\in \tau$ and $A\notin \mathcal{P}$ (by Corollary ~\ref{6}, every $\tau^{\diamond}$-open set $G$ is of this form).
 Since $C(X)-\{X\}\subseteq \mathcal{P}$, by Theorem ~\ref{7} we have $U\subseteq U^{\diamond}$. Hence, by Theorem ~\ref{8}, we get $U^{\diamond}=cl(U)=cl^{\diamond}(U)$.
 
 Now, let $G$ be $\tau^{\diamond}$-open. We claim that $G\subseteq G^{\diamond}$. In fact, $cl^{\diamond}(X\setminus G)=X\setminus G$, then 
 $(X\setminus G)^{\diamond}=X\setminus G$ and $X^{\diamond}\setminus G^{\diamond}=X\setminus G$ by Lemma ~\ref{10} and  by Theorem ~\ref{7} we have
 $X\setminus G^{\diamond}=X\setminus G$, $G\subseteq G^{\diamond}$. Hence, by Theorem ~\ref{8}, $G^{\diamond}=cl(G)=cl^{\diamond}(G)$.
 
 Again, $G\subseteq U$, so $G^{\diamond}\subseteq U^{\diamond}$ and also $G^{\diamond}= (U\cap A)^{\diamond}=(U- A^{c})^{\diamond}\supseteq U^{\diamond}- (A^{c})^{\diamond}= U^{\diamond}$ by Lemma ~\ref{10} and Lemma ~\ref{11} as $A\notin \mathcal{P}$. Thus, $U^{\diamond}=G^{\diamond}$. Therefore,  we have $cl(G)=cl^{\diamond}(G)=G^{\diamond}=U^{\diamond}=cl(U)=cl^{\diamond}(U)$.
\end{proof}
%%%%%%%%%%%%%%%%%%%%%%%%%%%%%%%%%%%%%%%%%%%%%%%%%%%%%%%%%%%%%%%%%%
%%%%%%%%%%%%%%%%%%%%%%%%%%%%%%%%%%%%%%%%%%%%%%%%%%%%%%%%%%%%%%%%%%%
%%%%%%%%%%%%%%%%%%%%%%%%%%%%%%%%%%%%%%%%%%%%%%%%%%%%%%%%%%%%%%%%%%
%%%%%%%%%%%%%%%%%%%%%%%%%%%%%%%%%%%%%%%%%%%%%%%%%%%%%%%%%%%%%%%%%%%

%%%%%%%%%%%%%%%%%%%%%%%%%%%%%%%%%%%%%%%%%%%%%%%%%%%%%%%%%%%%%%%%%%
%%%%%%%%%%%%%%%%%%%%%%%%%%%%%%%%%%%%%%%%%%%%%%%%%%%%%%%%%%%%%%%%%%%

\begin{theorem}~\label{16}
Let $(X, \tau, \mathcal{P})$ be a primal topological space such that $\tau$ is suitable for
$\mathcal{P}$. Then, for every $A\in \tau$ and any $B\subseteq X$, 
$(A\cap B)^{\diamond}=(A\cap B^{\diamond})^{\diamond}=cl(A\cap B^{\diamond})$.
\end{theorem}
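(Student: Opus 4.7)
The plan is to establish the chain of equalities in two stages, proving the first equality $(A\cap B)^{\diamond}=(A\cap B^{\diamond})^{\diamond}$ by double inclusion, and then deducing the second equality $(A\cap B^{\diamond})^{\diamond}=cl(A\cap B^{\diamond})$ from Theorem \ref{8}.

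For the inclusion $(A\cap B^{\diamond})^{\diamond}\subseteq (A\cap B)^{\diamond}$, I would use that $A$ is open together with Corollary \ref{14}, which yields $A\cap B^{\diamond}\subseteq (A\cap B)^{\diamond}$. Applying monotonicity (Theorem \ref{1a}(5)) gives $(A\cap B^{\diamond})^{\diamond}\subseteq ((A\cap B)^{\diamond})^{\diamond}$, and then the idempotence of $\diamond$ afforded by suitability (Corollary \ref{15}) collapses the right-hand side to $(A\cap B)^{\diamond}$.

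For the reverse inclusion $(A\cap B)^{\diamond}\subseteq (A\cap B^{\diamond})^{\diamond}$, I would split $A\cap B$ along $B^{\diamond}$ as
\[
A\cap B=\bigl(A\cap B\cap B^{\diamond}\bigr)\cup\bigl((A\cap B)\setminus B^{\diamond}\bigr),
\]
and apply the additivity from Theorem \ref{1a}(6) to write the $\diamond$ of the union as the union of the two $\diamond$'s. The first piece sits inside $A\cap B^{\diamond}$, so by monotonicity its $\diamond$ lies inside $(A\cap B^{\diamond})^{\diamond}$. The second piece sits inside $B\setminus B^{\diamond}$; here I would invoke suitability to get condition (2) of Theorem \ref{1}, which says $(B\setminus B^{\diamond})^{\diamond}=\emptyset$, and monotonicity then forces $((A\cap B)\setminus B^{\diamond})^{\diamond}=\emptyset$. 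This second piece is the main obstacle---it is where suitability is really doing the work, through Theorem \ref{1}(2).

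Once the first equality is in hand, the second equality follows quickly. Using Corollary \ref{14} once more we get $A\cap B^{\diamond}\subseteq (A\cap B)^{\diamond}$, and combining with the first equality we have $A\cap B^{\diamond}\subseteq (A\cap B^{\diamond})^{\diamond}$. Setting $S=A\cap B^{\diamond}$ we are in the hypothesis $S\subseteq S^{\diamond}$ of Theorem \ref{8}, which immediately gives $cl(S)=S^{\diamond}$, i.e.\ $cl(A\cap B^{\diamond})=(A\cap B^{\diamond})^{\diamond}$, completing the proof.
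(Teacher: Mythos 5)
Your proof is correct and follows essentially the same route as the paper: Corollary \ref{14} plus idempotence (Corollary \ref{15}) for one inclusion, and the vanishing of $(B\setminus B^{\diamond})^{\diamond}$ from Theorem \ref{1}(2) for the reverse (the paper packages your explicit union decomposition as an appeal to Lemma \ref{10}, which amounts to the same computation). The only minor variation is the last step, where you derive $cl(A\cap B^{\diamond})=(A\cap B^{\diamond})^{\diamond}$ by feeding $S=A\cap B^{\diamond}$, $S\subseteq S^{\diamond}$ into Theorem \ref{8}, whereas the paper argues the two inclusions directly; both are valid and equally short.
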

\begin{proof}
Let $A\in \tau$. Then by Corollary ~\ref{14}, $A\cap  B^{\diamond}=A\cap(A\cap B)^{\diamond} \subseteq (A\cap B)^{\diamond}$
and  hence, $(A\cap  B^{\diamond})^{\diamond} \subseteq [(A\cap B)^{\diamond}]^{\diamond}=(A\cap B)^{\diamond}$ by Corollary ~\ref{15}.

Now by using Corollary ~\ref{14} and Theorem  ~\ref{1},  we obtain $[A\cap  (B\setminus B^{\diamond})]^{\diamond}=A\cap(B\setminus B^{\diamond})^{\diamond}=A\cap \emptyset=\emptyset $.

Also, $(A\cap B)^{\diamond}\setminus (A\cap B^{\diamond})^{\diamond}\subseteq [(A\cap B)\setminus (A\cap B^{\diamond})]^{\diamond}= [A\cap  (B\setminus B^{\diamond})]^{\diamond}=\emptyset$ by Lemma ~\ref{10}. Hence, $(A\cap B)^{\diamond}\subseteq(A\cap B^{\diamond})^{\diamond}$ and,  we get $(A\cap B)^{\diamond}=(A\cap B^{\diamond})^{\diamond}$.  

Again $(A\cap B)^{\diamond}=(A\cap B^{\diamond})^{\diamond}\subseteq cl(A\cap B^{\diamond})$, since $\tau^{\diamond}$ is finer than $\tau$.
Due to $A\cap  B^{\diamond} \subseteq (A\cap B)^{\diamond}$, we have $cl(A\cap  B^{\diamond} )\subseteq cl((A\cap B)^{\diamond})=(A\cap B)^{\diamond}$.
Hence, $(A\cap B)^{\diamond}=cl(A\cap B^{\diamond})$.
\end{proof}
%%%%%%%%%%%%%%%%%%%%%%%%%%%%%%%%%%%%%%%%%%%%%%%%%%%%%%%%%%%%%%%%%%
%%%%%%%%%%%%%%%%%%%%%%%%%%%%%%%%%%%%%%%%%%%%%%%%%%%%%%%%%%%%%%%%%%%
%%%%%%%%%%%%%%%%%%%%%%%%%%%%%%%%%%%%%%%%%%%%%%%%%%%%%%%%%%%%%%%%%%
%%%%%%%%%%%%%%%%%%%

%%%%%%%%%%%%%%%%%%%%%%%%%%%%%%%%%%%%%%%%%%%%%%%%%%%%%%%%%%%%%%%%%%
%%%%%%%%%%%%%%%%%%%%%%%%%%%%%%%%%%%%%%%%%%%%%%%%%%%%%%%%%%%%%%%%%%%

\begin{corollary}
Let $(X, \tau, \mathcal{P})$ be a primal topological space  such that $\tau$ is suitable for
$\mathcal{P}$. If $A\in \tau$ and $A^{c}\notin\mathcal{P}$, then $A\subseteq X\setminus X^{\diamond}$.
\end{corollary}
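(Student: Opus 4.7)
The plan is to pass through the previous Theorem \ref{16} with $B=X$. That theorem requires $\tau$ to be suitable for $\mathcal{P}$, which is exactly our hypothesis; since $A\in\tau$, applying it with $B=X$ yields
\[
A^{\diamond} = (A\cap X)^{\diamond} = (A\cap X^{\diamond})^{\diamond} = cl(A\cap X^{\diamond}).
\]
So the conclusion about $X^{\diamond}$ is tied to the value of $A^{\diamond}$.

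Next I would kill $A^{\diamond}$ using the other hypothesis $A^{c}\notin\mathcal{P}$: Lemma \ref{11} gives directly that $A^{\diamond}=\emptyset$. Combining this with the chain above produces $cl(A\cap X^{\diamond})=\emptyset$, which forces $A\cap X^{\diamond}\subseteq cl(A\cap X^{\diamond})=\emptyset$, i.e., $A\cap X^{\diamond}=\emptyset$. Rewriting this as $A\subseteq X\setminus X^{\diamond}$ finishes the proof.

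There is really no obstacle here: the suitability hypothesis is used only to invoke Theorem \ref{16}, and the hypothesis $A^{c}\notin\mathcal{P}$ is used only to invoke Lemma \ref{11}. The one point to double-check is that we are entitled to specialize Theorem \ref{16} at $B=X$; since $X\subseteq X$ is an arbitrary subset and $A$ is open, this is legal, so the argument is genuinely a two-line consequence of the preceding results.
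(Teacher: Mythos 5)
Your proof is correct and follows essentially the same route as the paper: specialize Theorem \ref{16} at $B=X$, use Lemma \ref{11} to get $A^{\diamond}=\emptyset$, and conclude $A\cap X^{\diamond}=\emptyset$. Your final step (a set is contained in its closure, which is empty) is in fact slightly cleaner than the paper's appeal to Theorem \ref{12} at that point.
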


\begin{proof} Taking $B=X$ in Theorem ~\ref{16}, we get $(A\cap X)^{\diamond}=cl(A\cap X^{\diamond})$. Thus, $A^{\diamond}=cl(A\cap X^{\diamond})$, 
for all $A\in \tau$. Now if $A^{c}\notin\mathcal{P}$, then $A^{\diamond}=\emptyset$. Thus, $(A\cap X)^{\diamond}=cl(A\cap X^{\diamond})=\emptyset$. So, 
$A\cap X^{\diamond}=\emptyset$ by Theorem ~\ref{12}  and hence,  $A\subseteq X\setminus X^{\diamond}$.
\end{proof}
%%%%%%%%%%%%%%%%%%%%%%%%%%%%%%%%%%%%%%%%%%%%%%%%%%%%%%%%%%%%%%%%%%
%%%%%%%%%%%%%%%%%%%%%%%%%%%%%%%%%%%%%%%%%%%%%%%%%%%%%%%%%%%%%%%%%%
%%%%%%%%%%%%%%%%%%%%%%%%%%%%%%%%%%%%%%%%%%%%%%%%
%References
\section{Conclusion}

In this paper, we introduced $\Psi$ operator using primal \cite{aoi} and studied some fundamental properties. Moreover, we introduce topology suitable for a primal and proved some equivalent conditions. We hope that these results will find these suitable roles in topological research related to primal. \\ 

\bibliographystyle{amsplain,latexsym}

\end{document}